\documentclass[a4paper,10pt]{amsart}
\usepackage{amssymb,amscd}
\usepackage[all]{xy}
\renewcommand{\iff}{if and only if }
\newcommand{\st}{such that }

\newcommand{\modR}{\hbox{{\rm mod-}}R}

\newcommand{\ModR}{\hbox{{\rm Mod-}}R}


\DeclareMathOperator{\Hom}{Hom}
\DeclareMathOperator{\Dom}{Dom}
\DeclareMathOperator{\End}{End}
\DeclareMathOperator{\Ext}{Ext}

\DeclareMathOperator{\Ker}{Ker}
\DeclareMathOperator{\Img}{Im}
\DeclareMathOperator{\Coker}{Coker}

\DeclareMathOperator{\cf}{cf}



\usepackage[usenames]{color}

\theoremstyle{plain}
\newtheorem{thm}{Theorem}[section]
\newtheorem{prop}[thm]{Proposition}
\newtheorem{lem}[thm]{Lemma}

\newtheorem{cor}[thm]{Corollary}
\theoremstyle{definition}
\newtheorem{defn}[thm]{Definition}

\theoremstyle{remark}
\newtheorem{rem}{Remark}

\begin{document}
\title{On the non-existence of right almost split maps}

\author{\textsc{Jan \v Saroch}}
\address{Charles University, Faculty of Mathematics and Physics, Department of Algebra \\ 
Sokolovsk\'{a} 83, 186 75 Praha~8, Czech Republic}
\email{saroch@karlin.mff.cuni.cz}

 
\keywords{Right almost split map, tree module, non-existence of precovers, morphism determined by object}

\thanks{This research has been supported by grant GA\v CR 14-15479S}

\subjclass[2010]{16G70 (primary), 16D10, 16E30 (secondary)}
\date{\today}

\begin{abstract} We show that, over any ring, a module $C$ is a codomain of a~right almost split map \iff $C$ is a finitely presented module with local endomorphism ring; thus we give an answer to a 40 years old question by M. Auslander. Using the tools developed, we also provide a useful sufficient condition for a class of modules to be non-precovering. Finally, we show a non-trivial application in the general context of morphisms determined by object.\end{abstract} 

\maketitle
\vspace{4ex}

\section{Introduction}
\label{sec:intro}

Almost split sequences (also called Auslander--Reiten sequences) represent the central tool of Auslander--Reiten theory. They serve as stepping stones in the hard task to understand the possible extensions in the category of finitely generated modules over an Artin algebra. Their utilization, however, is not restricted to this very context. The theory of almost split sequences is developed in various other categories, for instance, \cite{KL}, in the category of complexes of modules (and correlatively, in its triangle version, in the homotopy category of modules), or for the general case of exact categories, \cite{LNP}.

The important question, common in various contexts, is whether, for a given object $C$, there exists an almost split sequence beginning or ending in $C$ (cf. \cite[questions $(1), (2)$ on pg. 4]{Aus75}). Since the almost split sequence comprises of two parts, the left almost split map and the right almost split map, we can ask even for the mere existence of these maps having $C$ as domain, codomain resp.

In the category $\ModR$, where $R$ is any ring, it is easy to show that a necessary condition on $C$ is that $\End_R(C)$ is local. In fact, by a result of Auslander, a finitely presented module $C$ is the codomain of a right almost split map in $\ModR$ if and only if $\End_R(C)$ is local. There are examples, however, where the domain of the right almost split map is necessarily non-finitely presented. Although it may happen that there still is a right almost split map in $\modR$ having $C$ as its codomain.

These examples illustrate that the relation between the existence of right almost split maps (and correlatively almost split sequences) in categories $\modR$ and $\ModR$ is rather intricate. Suppose, on the other hand, that we want to use the machinery of almost split maps and the properties they provide us with to study modules which are not finitely presented. Is the situation more clear in this case? Could we perhaps obtain some interesting new information on the possible extensions involving a particular countably, or even non-countably presented module?

The answer to these questions, as it turns out, is `yes and no'. To be more precise: the main result of this paper, Theorem~\ref{t:main}, states that every right almost split map has to have a finitely presented codomain. This has been recently conjectured in a slightly weaker form in \cite{Kr}, however, already in \cite{Aus75}, Auslander asked for the precise description of modules appearing as the right-hand terms in almost split sequences.


The proof of our result uses basically two main tools. The first one is a construction of a so-called tree module, i.e., a particular combinatorial object which serves as a test module for splitting of a given epimorphism, in our case of a right almost split map $f:B\to C$ with a $\theta$-presented module $C$ where $\theta$ is an infinite cardinal. The tree module appears as the middle term in a short exact sequence which is subsequently used to show that $C$ has to be, in fact, $<\theta$-presented.

This is done by incorporating the second main tool---a modified version of Hunter's cardinal counting argument (Lemma~\ref{l:Hun}), recently used in \cite{AST} as the key part of the proof that the class of all flat Mittag--Leffler modules is not precovering unless the underlying ring is right perfect.

\smallskip

The structure of this paper is pretty straightforward. After a short section where we fix our notation and recall some of the known results, we describe the construction of tree modules in detail in \S 3. Apart from this rather technical construction, as a sort of byproduct, we answer a question by G.\ Bergman from \cite{Be} in Theorem~\ref{t:Berg}.

In \S 4, we present the main theorem of our paper. We also discuss an application of the machinery at our disposal to the theory of approximations of modules. Finally, in \S 5 we prove a non-trivial result concerning the general concept of morphisms determined by object.

\section{Preliminaries}
\label{sec:prelim}

Throughout the paper, $R$ denotes an (associative) ring with enough idempotents. By a module, we mean a unitary right $R$-module (i.e. $M$ such that $M = MR$). The category of all modules is denoted by $\ModR$, its full subcategory consisting of all finitely presented modules by $\modR$ and the category of all flat modules by Flat-$R$. In this case, $\ModR$ is a finitely accessible Grothendieck category. All our results are theorems in ZFC; bar Lemma~\ref{l:bigtree}, about elements of $\ModR$.

Let $B, C\in\ModR$. A homomorphism $f:B\to C$ is called a \emph{right almost split map} if, given any $M\in\ModR$ and $k\in\Hom_R(M,C)$, the map $k$~factorizes through $f$ \iff $k$ is not a split epimorphism. A \emph{left almost split map} is defined dually. We say that a short exact sequence 
$$0 \longrightarrow A \overset{m}\longrightarrow B \overset{f}\longrightarrow C \longrightarrow 0$$
is an \emph{almost split sequence} if $m$ is left almost split and $f$ is right almost split.

It is not hard to show that a right almost split map appears as the epimorphism in an almost split sequence \iff it is surjective and its kernel has got local endomorphism ring. Moreover, we have the following properties.

\begin{prop}\label{p:few} Let $f:B\to C$ be a right almost split map. Then:
\begin{enumerate}
	\item The endomorphism ring of $C$ is local.
	\item The map $f$ is surjective \iff $C$ is not a projective module.
\end{enumerate}
\end{prop}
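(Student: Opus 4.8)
The plan is to establish both parts by exploiting the defining factorization property of a right almost split map $f\colon B\to C$, testing it against carefully chosen maps into $C$.

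For part $(1)$, I would show that every endomorphism of $C$ is either an automorphism or is non-invertible in a way that the non-units form an ideal. Take $g\in\End_R(C)$. If $g$ is not a split epimorphism, then applying the defining property to $k=g\colon C\to C$ gives a factorization $g=f h$ for some $h\colon C\to B$; in particular $g$ factors through $f$. Now I would argue that the set of all $g\in\End_R(C)$ which are \emph{not} split epimorphisms is closed under addition: if $g_1,g_2$ are both non-split-epi, write $g_i=f h_i$, so $g_1+g_2=f(h_1+h_2)$ factors through $f$; were $g_1+g_2$ a split epimorphism, say $(g_1+g_2)s=\mathrm{id}_C$, then $f(h_1+h_2)s=\mathrm{id}_C$ shows $f$ is a split epimorphism, which forces $\mathrm{id}_C$ (a non-split-epi would be absurd, so) — here one checks that $f$ being split epi contradicts that $f$ is right almost split unless $C$ is projective, but even then every map into $C$ factors through $f$; in the projective case one argues directly. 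The cleaner route: the non-split-epimorphisms in $\End_R(C)$ coincide with the non-units (a split epimorphism of $C$ to itself, $C$ being arbitrary, need not be an automorphism in general, so this needs the factorization property to promote split epis to isos). I would show: if $g$ is a split epimorphism then $g$ is an automorphism, using that the complementary summand $\Ker g$ admits, via the splitting and $f$, a factorization forcing it to be zero. Having shown non-units form an additive subgroup (and they are trivially closed under left and right multiplication by arbitrary endomorphisms), $\End_R(C)$ is local.

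For part $(2)$, if $C$ is projective then $\mathrm{id}_C\colon C\to C$ is a split epimorphism, so by the defining property it need not factor through $f$; but I want the stronger conclusion that $f$ need not be surjective, equivalently that $f$ is \emph{not} surjective. Suppose $f$ were surjective with $C$ projective: then $f$ itself is a split epimorphism, so $\mathrm{id}_C$ factors through $f$, contradicting that $\mathrm{id}_C$, being a split epimorphism, must \emph{not} factor through $f$ — wait, the condition says $k$ factors through $f$ iff $k$ is \emph{not} a split epi, so a split epi $k$ must fail to factor through $f$; but if $f$ is itself a split epi then $\mathrm{id}_C=f s$ factors through $f$, contradiction. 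Hence if $C$ is projective, $f$ is not surjective. Conversely, if $C$ is not projective, I must show $f$ is surjective: let $P\to C$ be an epimorphism with $P$ projective; this is not a split epimorphism (else $C$ is a summand of $P$, hence projective), so it factors through $f$, and therefore $\Img f\supseteq\Img(P\to C)=C$, i.e.\ $f$ is surjective.

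The main obstacle I anticipate is the bookkeeping in part $(1)$: the passage from ``split epimorphism'' to ``isomorphism'' for endomorphisms of $C$ is not automatic for a general module $C$ over a ring with enough idempotents, and getting the ideal of non-units right requires using the factorization property in both directions rather than just the easy implication. Once one has the dictionary ``$g\in\End_R(C)$ is a unit $\iff$ $g$ is a split epimorphism'', locality follows formally from closure of the split epimorphisms under the complement and of the non-split-epimorphisms under addition. Part $(2)$ is then a short corollary of the same factorization bookkeeping applied to projective covers/presentations.
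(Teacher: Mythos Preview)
The paper states Proposition~\ref{p:few} without proof, treating it as standard background, so there is no argument to compare against. Your outline is essentially the correct standard one and would go through with minor cleanup.

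For part~$(1)$, your detour through ``unless $C$ is projective'' is unnecessary and slightly misleading: a right almost split map $f$ is \emph{never} a split epimorphism (for $C\neq 0$), regardless of projectivity, since otherwise $\mathrm{id}_C$ would factor through $f$ while being a split epimorphism, violating the defining biconditional outright. With that in hand, your additive-closure argument for the non-split-epimorphisms is clean. The step you correctly flag as the obstacle---promoting a split epimorphism $g\in\End_R(C)$ to an automorphism---does work along the lines you sketch: writing $C=s(C)\oplus\Ker g$ with $gs=\mathrm{id}_C$, if $\Ker g$ were a nonzero proper summand then both idempotents $sg$ and $\mathrm{id}_C-sg$ would be non-surjective, hence non-split-epi, hence factor through $f$, forcing $\mathrm{id}_C=sg+(\mathrm{id}_C-sg)$ to factor through $f$, a contradiction. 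One small overstatement: closure of the non-units under \emph{left} multiplication is not quite ``trivial''---it uses the split-epi\,$\Rightarrow$\,iso step just established (if $ag$ were a unit then $a$ would be a split epi, hence a unit, whence $g=a^{-1}(ag)$ would be a unit). Alternatively, and more economically, once you have ``$g$ or $\mathrm{id}_C-g$ is a unit for every $g$'' the ring is local by definition and the multiplicative closure comes for free.

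Part~$(2)$ is correct as written.
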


The main theorem on the existence of right almost split maps is due to Auslander. It is a partial converse of $(1)$ above.

\begin{thm}\label{t:Aus}{\rm (\cite[Theorem 4]{Au})} Let $C$ be a finitely presented module. There exists a right almost split map with codomain $C$ \iff the module $C$ has got local endomorphism ring. Moreover, if $C$ is non-projective, then there is even an almost split sequence ending in $C$.
\end{thm}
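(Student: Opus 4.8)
Here is how I would go about it.

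The ``only if'' direction is Proposition~\ref{p:few}(1), so assume from now on that $C$ is finitely presented and that $E:=\End_R(C)$ is local, with Jacobson radical $\mathfrak m$; the task is to build a right almost split map ending in $C$, and I would treat the case where $C$ is projective separately from the case where it is not.

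Suppose first $C$ is projective; being also finitely presented it is finitely generated projective, and locality of $E$ makes it indecomposable. I would check that the inclusion $\iota\colon C\,\mathrm{rad}(R)\hookrightarrow C$ is right almost split. The only non-formal point is that $C/C\,\mathrm{rad}(R)$ is a \emph{simple} module: it is a finitely generated projective module over the semiprimitive ring $R/\mathrm{rad}(R)$ whose endomorphism ring is a quotient of the local ring $E$ (endomorphisms lift along $C\to C/C\,\mathrm{rad}(R)$ since $C$ is projective), and such a module is simple. Granting this, $C\,\mathrm{rad}(R)$ is the unique maximal submodule of $C$, so every proper submodule of $C$ is contained in it. Since $C$ is projective, $k\colon M\to C$ is a split epimorphism exactly when it is onto; if it is not, $\Img(k)$ is proper, hence $\Img(k)\subseteq C\,\mathrm{rad}(R)$ and $k$ factors through $\iota$; an onto map cannot factor through the proper submodule $C\,\mathrm{rad}(R)$; and $\iota$ is not onto by Nakayama's lemma. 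Thus $\iota$ is right almost split, and it is not surjective, in accordance with Proposition~\ref{p:few}(2).

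Now suppose $C$ is not projective; here the plan is to construct an almost split sequence $0\to A\to B\overset{f}\to C\to 0$. Fix a projective presentation $P_1\to P_0\overset{p}\to C\to 0$ with $P_0,P_1$ finitely generated projective; as $C$ is finitely presented we take $K:=\Ker p$ finitely generated. The sequence $0\to K\hookrightarrow P_0\overset{p}\to C\to 0$ is non-split (otherwise $C$ would be projective) and represents a non-zero $\eta\in\Ext^1_R(C,K)$; since $P_0$ is projective, every short exact sequence ending in $C$ arises by pushout from this one along a map out of $K$, so the functor $F:=\Ext^1_R(C,-)\colon\ModR\to\Ab$ is generated by $\eta$, and the cyclic right $E$-submodule $\eta E\subseteq F(K)$ is finitely generated, so $\eta\notin\eta\mathfrak m$ by Nakayama. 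I would then look for a quotient $\pi\colon K\twoheadrightarrow A$ such that $\xi:=\pi_*\eta\in\Ext^1_R(C,A)$ is non-zero, $\xi\mathfrak m=0$, and $\End_R(A)$ is local: one expects to find it by taking, among the submodules $K'\subseteq K$ with $\eta\notin\Img\!\big(\Ext^1_R(C,K')\to F(K)\big)$, a maximal one — here finite presentation of $C$ enters, via the injectivity of $\varinjlim_\lambda\Ext^1_R(C,N_\lambda)\to\Ext^1_R(C,\bigcup_\lambda N_\lambda)$ for chains of submodules of $K$, which makes that class closed under unions of chains — setting $A:=K/K'$, and deducing from the maximality of $K'$ and from $\eta\notin\eta\mathfrak m$ that $\xi\ne0$, that $\xi$ generates a \emph{simple} subfunctor of $F$ (so $\xi\mathfrak m=0$), and that $\End_R(A)$ is local. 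Let $0\to A\overset{m}\to B\overset{f}\to C\to 0$ be the pushout of $0\to K\to P_0\to C\to 0$ along $\pi$, so its class is $\xi$.

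It remains to verify that $f$ is right almost split; once that is done, $f$ is surjective by Proposition~\ref{p:few}(2) (as $C$ is non-projective) and, $\End_R(A)$ being local, the sequence $0\to A\to B\to C\to 0$ is almost split by the criterion recalled just before Proposition~\ref{p:few}. That no split epimorphism to $C$ factors through $f$ is immediate from $\xi\ne0$; that every non-automorphism $C\to C$ — i.e.\ every element of $\mathfrak m$ — pulls $\xi$ back to zero is immediate from $\xi\mathfrak m=0$ and locality of $E$. The substantive point, which I expect to be the main obstacle, is to show that an \emph{arbitrary} $k\colon M\to C$ that is not a split epimorphism satisfies $k^*\xi=0$ in $\Ext^1_R(M,A)$, so that $k$ factors through $f$. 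For finitely presented $M$ one can try to extract this from the simplicity of the subfunctor generated by $\xi$ (the kernel of $k^*$ on that subfunctor is itself a subfunctor, hence $0$ or all of it, and being $0$ is incompatible with $k$ not being a split epimorphism); and the passage from finitely presented test modules to an arbitrary $M$ — by writing $M$ as a direct limit of finitely presented modules and transferring the vanishing through the colimit — is precisely where finite presentation of $C$ is indispensable (and it is exactly this step whose failure for non-finitely-presented codomains is the theme of the present paper). Combining these yields a right almost split map with codomain $C$.
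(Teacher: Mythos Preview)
The paper does not supply a proof of this theorem: it is quoted verbatim from Auslander's survey \cite[Theorem~4]{Au} and stated without argument, serving purely as background for the converse direction established in Theorem~\ref{t:main}. There is therefore nothing in the paper to compare your proposal against.

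As a standalone sketch your outline follows a recognisable route, but the step you yourself flag as ``the main obstacle'' is a genuine gap rather than a routine verification. Writing an arbitrary $M$ as a filtered colimit $\varinjlim M_i$ of finitely presented modules and knowing that each $k_i^*\xi=0$ in $\Ext^1_R(M_i,A)$ gives you a compatible family of \emph{vanishings}, but not a compatible family of \emph{liftings} $M_i\to B$: the lifts are only unique up to $\Hom_R(M_i,A)$, and there is no reason the resulting inverse system should have a global section, so one cannot conclude $k^*\xi=0$ this way. Auslander's actual argument avoids this by passing through the functor category (or, equivalently, through the transpose and a duality), where the finite presentation of $C$ is used to identify $\Ext^1_R(C,-)$ with a finitely presented functor and to exploit the existence of simple functors and injective envelopes there; the ``simple subfunctor'' you allude to is the right object, but producing the module $A$ with local endomorphism ring and proving the factorisation property for \emph{all} $M$ requires that functor-category machinery, not a direct-limit argument on the test module. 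Your projective case also needs more care over a general ring with enough idempotents: that an indecomposable finitely generated projective over $R/\mathrm{rad}(R)$ with local endomorphism ring is simple is not a one-line fact in this generality.
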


\medskip

In what follows, given a set $X$, we denote by $|X|$ the cardinality of $X$. Formally, $|X|$ denotes the set of all ordinal numbers smaller than the cardinal number $|X|$ (which is an ordinal as well). For example, if $X$ is a finite set of $n$ elements, then $|X|=\{0,1,\dotsc,n-1\}=n$. As usual, $|X|+|Y|$ denotes the cardinality of the disjoint union of $X$ and $Y$. If $\alpha, \beta$ are ordinals, we use interchangeably the notations $\alpha<\beta, \alpha\in\beta$ in the obvious meaning `$\alpha$ is less than $\beta$'. Finally, for an infinite cardinal $\theta$, $\cf(\theta)$ denotes the \emph{cofinality of $\theta$},i.e., the least cardinality of a set of smaller ordinals which converge to $\theta$. We always have $\theta\geq\cf(\theta)$. An infinite cardinal $\theta$ is called \emph{regular} if $\theta = \cf(\theta)$, otherwise it is called \emph{singular}. Note that $\cf(\theta)$ is always a regular cardinal. By $\omega$ or $\aleph_0$, we denote the countable cardinal.

For any two sets $X,Y$, we denote by ${}^XY$ the set of all functions from $X$ to~$Y$. Moreover, if $\lambda, \mu$ are cardinals, then $\lambda^{<\mu}$ denotes the cardinality of the set ${}^{<\mu}\lambda = \{\eta:\alpha\to \lambda\mid \alpha <\mu, \eta\hbox{ a function}\}$. Similarly, $\lambda^\mu$ denotes the cardinality of the set~${}^\mu \lambda$. It comes in handy to view the elements of ${}^\mu\lambda$ as subsets of $\mu\times\lambda$.

\smallskip

For a module $M = \prod_{i\in I} M_i$ and an infinite cardinal $\mu$, we denote by $\prod_{i\in I}^{<\mu} M_i$ the submodule of $M$ consisting of all elements with support of cardinality $<\mu$. We call this submodule \emph{a $\mu$-bounded product} of the modules $M_i$.

We say that a module $M$ is \emph{finitely presented} if the functor $\Hom(M,-)$ commutes with direct limits. 
For an infinite cardinal $\theta$, we call a module $M$ \emph{$\theta$-presented} (\emph{$<\theta$-presented}, resp.) provided that $M$ is the direct limit of a direct system of cardinality $\leq\theta$ ($<\theta$, resp.) consisting of finitely presented modules.

We finish by recalling a useful classic result. We say that a well-ordered direct system of modules $(M_\alpha, m_{\beta\alpha} \mid \alpha\leq\beta<\mu)$, where $\mu$ is a regular infinite cardinal, is \emph{continuous} provided that, for each $\delta<\mu$ limit, $M_\delta = \varinjlim (M_\alpha, m_{\beta\alpha} \mid \alpha\leq\beta<\delta)$.

\begin{lem}\label{l:present} Let $\theta$ be an infinite cardinal and $M$ a $\theta$-presented module. Then $M$ is the direct limit of a continuous well-ordered direct system of cardinality $\cf(\theta)$ consisting of $<\theta$-presented modules.
\end{lem}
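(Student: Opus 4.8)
The plan is to fix a presentation of $M$ as a directed colimit over a small directed index set and then to exhaust that index set by a continuous increasing chain of ``small'' directed pieces, taking colimits along the chain. Concretely, write $M=\varinjlim_{i\in I}(F_i,f_{ji}\mid i\le j\in I)$ with $I$ a directed poset, $|I|\le\theta$, and every $F_i$ finitely presented (replacing the index category by its poset reflection if necessary, which alters neither the colimit nor the cardinality). If $|I|<\theta$, then $M$ is itself $<\theta$-presented and the constant system $(M_\alpha:=M\mid\alpha<\cf(\theta))$ with identity transition maps already proves the lemma; so we may assume $|I|=\theta$.

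The combinatorial core is the following claim: there is a chain $(I_\alpha\mid\alpha<\cf(\theta))$ of directed subsets of $I$, increasing and \emph{continuous} in the sense that $I_\delta=\bigcup_{\alpha<\delta}I_\alpha$ for every limit ordinal $\delta<\cf(\theta)$, with $|I_\alpha|<\theta$ for each $\alpha$ and $\bigcup_{\alpha<\cf(\theta)}I_\alpha=I$. Granting this, put $M_\alpha:=\varinjlim_{i\in I_\alpha}F_i$, and let $m_{\beta\alpha}\colon M_\alpha\to M_\beta$ (for $\alpha\le\beta$) be the canonical comparison morphism induced by the inclusion $I_\alpha\subseteq I_\beta$; these compose correctly, so $(M_\alpha,m_{\beta\alpha}\mid\alpha\le\beta<\cf(\theta))$ is a well-ordered direct system. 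Each $M_\alpha$ is $<\theta$-presented by construction. Because a directed colimit over a directed union $I=\bigcup_\alpha I_\alpha$ of directed subposets is the colimit of the partial colimits $\varinjlim_{i\in I_\alpha}F_i$ — and likewise at each limit stage $\delta$, where $I_\delta=\bigcup_{\alpha<\delta}I_\alpha$ — the system $(M_\alpha)$ is continuous and satisfies $\varinjlim_{\alpha<\cf(\theta)}M_\alpha=\varinjlim_{i\in I}F_i=M$; this is precisely the system required, and it has cardinality $\cf(\theta)$.

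It remains to prove the claim, and the only point needing care is the cardinality bookkeeping — making the $I_\alpha$ directed while keeping them below $\theta$, and, when $\theta$ is singular, exhausting all of $I$ in merely $\cf(\theta)$ steps. Fix a surjection $\xi\mapsto i_\xi$ from $\theta$ onto $I$ and a closure operator $\mathrm{cl}$ assigning to each $S\subseteq I$ a directed set $\mathrm{cl}(S)\supseteq S$ with $|\mathrm{cl}(S)|\le|S|+\aleph_0$, obtained by iterating $\omega$ times the operation of adjoining a chosen upper bound for each finite subset. If $\theta=\omega$, the claim is the classical fact that a countable directed poset admits a cofinal chain of order type $\le\omega$, and the continuity requirement is vacuous. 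If $\theta$ is uncountable and regular, set $I_0=\mathrm{cl}(\{i_0\})$, $I_{\gamma+1}=\mathrm{cl}(I_\gamma\cup\{i_\gamma\})$, and $I_\delta=\bigcup_{\alpha<\delta}I_\alpha$ at limits; an easy induction, using that $\theta$ is regular and uncountable, gives $|I_\alpha|<\theta$ throughout, and $i_\gamma\in I_{\gamma+1}$ forces $\bigcup_\alpha I_\alpha=I$. Finally, if $\theta$ is singular, fix an increasing sequence of infinite cardinals $(\theta_\alpha\mid\alpha<\cf(\theta))$ with each $\theta_\alpha<\theta$ and $\sup_\alpha\theta_\alpha=\theta$, and run the same recursion with $I_{\gamma+1}=\mathrm{cl}\bigl(I_\gamma\cup\{i_\xi\mid\xi<\theta_{\gamma+1}\}\bigr)$ and $I_\delta=\bigcup_{\alpha<\delta}I_\alpha$ at limits; at such a $\delta$ one has $|I_\delta|\le|\delta|\cdot\sup_{\alpha<\delta}|I_\alpha|$, and since $|\delta|<\cf(\theta)$ the supremum stays below $\theta$, whence $|I_\delta|<\theta$, while $\bigcup_\alpha I_\alpha=I$ because the $\theta_\alpha$ are cofinal in $\theta=|I|$. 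In every case the resulting chain verifies the claim, which would complete the proof.
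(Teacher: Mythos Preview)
Your proof is correct and follows essentially the same strategy as the paper's: express $M$ as a directed colimit of finitely presented modules over a poset $I$ of size $\theta$, exhaust $I$ by a continuous chain of small directed subposets indexed by $\cf(\theta)$, and take partial colimits. The only difference is one of detail: the paper handles the combinatorial claim in one sentence by citing \cite[Lemma~1.6]{AR} (after fixing a continuous cofinal sequence $(\beta_\gamma\mid\gamma<\cf(\theta))$ of ordinals below $\theta$), whereas you write out the closure-operator recursion explicitly and separate the regular and singular cases; both arrive at the same chain $(I_\gamma)$ and the same system $(M_\gamma)$.
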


\begin{proof} For a suitable directed poset $(I,\leq)$, we express the module $M$ as the direct limit of a system $\mathcal F = (F_i, f_{ji}:F_i \to F_j \mid i\leq j\in I)$ consisting of finitely presented modules. Moreover, we can w.l.o.g.\ assume that $|I| = \theta$. If $\theta = \aleph_0$, there is a cofinal countable well-ordered subsystem of $\mathcal F$, so we can assume that $\theta$ is uncountable.

Let $(\beta _\gamma \mid \gamma<\cf(\theta))$ be a continuous increasing sequence of infinite ordinals smaller than $\theta$ converging to $\theta$. From this system, we easily build an $\subseteq$-increasing sequence $(I_\gamma \mid \gamma<\cf(\theta), |I_\gamma| = |\beta_\gamma|)$ of directed subposets of $(I,\leq)$ \st $I_\delta = \bigcup _{\gamma<\delta} I_\gamma$ for $\delta$ limit, and $I = \bigcup _{\gamma<\cf(\theta)} I_\gamma$ (cf. \cite[Lemma 1.6]{AR}).

It remains to define $M_\gamma = \varinjlim (F_i, f_{ji}:F_i \to F_j \mid i\leq j\in I_\gamma)$ and, for all $\gamma\leq\delta<\cf(\theta)$, let $m_{\delta\gamma}$ be the canonical colimit factoring map. Then $M_\gamma$ is $<\theta$-presented, for each $\gamma<\cf(\theta)$, and $M=\varinjlim (M_\gamma, m_{\delta\gamma} \mid \gamma\leq\delta<\cf(\theta))$.
\end{proof}


\section{Construction of tree modules}
\label{sec:tree}

Let $\theta$ be an infinite cardinal. We call a module $M$ \emph{finitely $\theta$-separable} if it is the directed union of a system consisting of $<\theta$-presented direct summands of $M$. We denote by $\mathcal S_\theta$ the class of all finitely $\theta$-separable modules.

In what follows, we construct a particular type of these modules, and we show that $\mathcal S_\theta$ forms a test class for splitting of epimorphisms with $\theta$-presented codomain. By this, we mean that, given any epimorphism $f:B\to C$ with $\theta$-presented codomain, there is a module $L\in\mathcal S_\theta$ such that $f$ is a split epimorphism \iff $\Hom_R(L,f)$ is surjective.

The construction presented in this section is not entirely new. It is an uncountable variant of a~well-known technique, cf. \cite[\S 5]{SlT}. However, we need to analyse the resulting object in more detail.

\smallskip

Recall that a module is \emph{pure-projective} if it is a direct summand in a direct sum of finitely presented modules. If a module $M$ has the property that each of its finite subsets is contained in a pure-projective submodule of $M$ which is pure in $M$, we call $M$ a \emph{Mittag--Leffler module}. We denote the class of all Mittag--Leffler modules by $\mathcal{ML}$. The modules in $\mathcal{ML}$ have been studied a lot in the last 40 years, and many equivalent characterizations are known of this class (cf. \cite[Theorem 3.14]{GT}). We have chosen this one since it immediately yields $\mathcal S_{\aleph_0}\subseteq\mathcal{ML}$.

\smallskip

As the first step towards the announced construction, we recall an instance of the classic result on embedding of direct limits into reduced products, cf. the proof of \cite[Theorem 3.3.2]{Pr}.

\begin{prop}\label{p:embed} Let $(C_\alpha,f_{\beta\alpha}:C_\alpha\to C_\beta\mid\alpha\leq\beta<\mu)$ be a well-ordered direct system of modules indexed by an infinite regular cardinal $\mu$. Then there is an embedding of pure short exact sequences
$$\begin{CD}
	  0			@>>> \prod_{\alpha<\mu}^{<\mu} C_\alpha @>{\subseteq_*}>> \prod_{\alpha<\mu} C_\alpha @ >>> \prod_{\alpha<\mu} C_\alpha/\prod_{\alpha<\mu}^{<\mu} C_\alpha @>>> 0\\	
	 @. 			@AAA 		@A{\rho_0}AA  @A{\sigma_0}AA  @.		\\
	0	@>{}>>	F  @>{\subseteq_*}>>  \bigoplus _{\alpha<\mu} C_\alpha  @>>> \varinjlim _{\alpha<\mu} C_\alpha @>>> 0
\end{CD}$$
where the second row is the canonical presentation of the direct limit, $\sigma_0$ is a pure monomorphism, and for all $\alpha<\mu$ and $x\in C_\alpha$, we have $\rho_0(x)(\beta)=f_{\beta\alpha}(x)$ if $\alpha\leq\beta<\mu$, and $\rho_0(x)(\beta)=0$ otherwise.
\end{prop}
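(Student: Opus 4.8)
The plan is to run the classical embedding argument behind \cite[Theorem 3.3.2]{Pr}, while keeping track of the explicit shape of $\rho_0$. Write $\pi\colon\bigoplus_{\alpha<\mu}C_\alpha\to\varinjlim_{\alpha<\mu}C_\alpha$ for the canonical epimorphism (so $F=\Ker\pi$ and the second row is exact), $g_\alpha\colon C_\alpha\to\varinjlim_{\alpha<\mu}C_\alpha$ and $\iota_\alpha\colon C_\alpha\to\bigoplus_{\alpha<\mu}C_\alpha$ for the structural maps (so $g_\alpha=\pi\iota_\alpha$), and $q\colon\prod_{\alpha<\mu}C_\alpha\to\prod_{\alpha<\mu}C_\alpha/\prod_{\alpha<\mu}^{<\mu}C_\alpha$ for the projection. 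First I would dispose of the two rows. The top one is pure-exact: given a finite system $\sum_j a_{ij}x_j=b_i$ with $a_{ij}\in R$, $b_i\in\prod_{\alpha<\mu}^{<\mu}C_\alpha$ and a solution $(v_j)_j$ in $\prod_{\alpha<\mu}C_\alpha$, replacing each $v_j$ by the element agreeing with it on $\bigcup_i\supp(b_i)$ (a set of cardinality $<\mu$) and vanishing elsewhere yields a solution inside $\prod_{\alpha<\mu}^{<\mu}C_\alpha$. The bottom one is pure-exact: for finitely presented $P$ and $g\in\Hom_R(P,\varinjlim_{\alpha<\mu}C_\alpha)$, the defining property of $P$ gives $\delta<\mu$ and $h\in\Hom_R(P,C_\delta)$ with $g=g_\delta h=\pi(\iota_\delta h)$, so $\iota_\delta h$ lifts $g$ along $\pi$.

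Next I would check that the prescribed rule extends additively to a homomorphism $\rho_0$ with $\rho_0((x_\alpha)_\alpha)(\gamma)=\sum_{\alpha\le\gamma}f_{\gamma\alpha}(x_\alpha)$, and that $\rho_0$ is injective: if $\rho_0(\xi)=0$ with $\xi=(x_\alpha)_\alpha\neq0$, evaluation at $\gamma=\min\supp(\xi)$ gives $x_\gamma=f_{\gamma\gamma}(x_\gamma)=0$, a contradiction. The crucial point is $\rho_0(F)\subseteq\prod_{\alpha<\mu}^{<\mu}C_\alpha$: if $\xi=(x_\alpha)_\alpha\in F$ then, by the usual description of direct limits over a directed poset, there is $\delta<\mu$ with $\sum_\alpha f_{\delta\alpha}(x_\alpha)=0$ in $C_\delta$, and then $\rho_0(\xi)(\gamma)=f_{\gamma\delta}\!\left(\sum_\alpha f_{\delta\alpha}(x_\alpha)\right)=0$ for all $\gamma\ge\delta$, so $\supp(\rho_0(\xi))\subseteq\delta$ has cardinality $<\mu$. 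Consequently $q\rho_0$ annihilates $F$ and factors as $q\rho_0=\sigma_0\pi$ for a unique homomorphism $\sigma_0$; taking $\rho_0|_F$ as the left vertical arrow makes the whole diagram commute, both rows being exact.

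It remains to prove that $\sigma_0$ is a pure monomorphism, and this is the only place where regularity of $\mu$ is essential. For injectivity: if $\rho_0(\xi)\in\prod_{\alpha<\mu}^{<\mu}C_\alpha$ with $\xi=(x_\alpha)_\alpha\neq0$, put $\delta_0=\max\supp(\xi)$ and $y=\sum_\alpha f_{\delta_0\alpha}(x_\alpha)\in C_{\delta_0}$; then $\rho_0(\xi)(\gamma)=f_{\gamma\delta_0}(y)$ for $\gamma\ge\delta_0$, and since $[\delta_0,\mu)$ has cardinality $\mu$ the small-support hypothesis forces $f_{\gamma\delta_0}(y)=0$ for some $\gamma$, i.e. $\pi(\xi)=g_{\delta_0}(y)=0$ and $\xi\in F$. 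For purity I would use the equational criterion: given $a_{ij}\in R$, $b_i=\sigma_0(u_i)$ with $u_i\in\varinjlim_{\alpha<\mu}C_\alpha$, and a solution $(\bar v_j)_j$ of $\sum_j a_{ij}x_j=b_i$ in $\prod_{\alpha<\mu}C_\alpha/\prod_{\alpha<\mu}^{<\mu}C_\alpha$, choose $\delta<\mu$ and $c_i\in C_\delta$ with $u_i=g_\delta(c_i)$ (finitely many $u_i$, directed index set), lift $\bar v_j$ to $v_j\in\prod_{\alpha<\mu}C_\alpha$, and observe that $\sum_j a_{ij}v_j-\rho_0(\iota_\delta(c_i))\in\prod_{\alpha<\mu}^{<\mu}C_\alpha$ for every $i$. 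The union $D$ of the (at most $n$, each of size $<\mu$) supports of these elements then satisfies $\sup D<\mu$ by regularity, so there is $\epsilon<\mu$ with $\epsilon>\sup D$ and $\epsilon\ge\delta$; in particular $\epsilon\notin D$, so evaluating the equations at the coordinate $\epsilon$ gives $\sum_j a_{ij}v_j(\epsilon)=\rho_0(\iota_\delta(c_i))(\epsilon)=f_{\epsilon\delta}(c_i)$ in $C_\epsilon$, whence $(g_\epsilon(v_j(\epsilon)))_j$ solves $\sum_j a_{ij}x_j=b_i$ and lies in $\sigma_0(\varinjlim_{\alpha<\mu}C_\alpha)$, because $\sum_j a_{ij}g_\epsilon(v_j(\epsilon))=g_\epsilon(f_{\epsilon\delta}(c_i))=g_\delta(c_i)=u_i$. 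The main obstacle is exactly this last manoeuvre: for singular $\mu$ the error set $D$ could be cofinal in $\mu$ and no single coordinate $\epsilon$ would simultaneously witness all the equations, which is why the hypothesis that $\mu$ be regular cannot be dropped.
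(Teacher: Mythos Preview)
Your argument is correct and is exactly the classical reduced-product embedding that the paper invokes by citing \cite[Theorem 3.3.2]{Pr}; the paper gives no further details, and you have supplied them faithfully, including the precise point where regularity of $\mu$ is used (bounding the finite union of small supports so that a single coordinate $\epsilon$ witnesses all equations simultaneously). The only cosmetic remark is that, since the paper works with right $R$-modules, the linear systems in the equational purity criterion should strictly be written with scalars acting on the right, but this does not affect the substance of your proof.
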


Applications of the embedding above are not very frequent in the literature. As a starter, we show the following interesting test for whether a module is cotorsion, answering \cite[Question 33]{Be}. Recall that a module $M$ is \emph{cotorsion} provided that $\Ext_R^1(F,M) = 0$ whenever $F$ is a flat module.

\begin{thm}\label{t:Berg} Let $R$ be a countable unital ring with $R^\omega$ flat and Mittag--Leffler, and let us denote by $\iota:R^{(\omega)}\to R^\omega$ the pure inclusion. Then a module $M$ is cotorsion \iff $\Hom_R(\iota,M)$ is surjective.
\end{thm}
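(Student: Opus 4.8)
\enspace The plan is to prove the two implications separately; they have very different flavour. The forward one is soft: $\iota$ is a pure monomorphism, since $R^{(\omega)}=\bigcup_{n<\omega}R^{n}$ is a directed union of direct summands of $R^{\omega}$, so---as $R^{\omega}$ is flat---the module $R^{\omega}/R^{(\omega)}$ is a pure-epimorphic image of a flat module, hence flat. Applying $\Hom_R(-,M)$ to $0\to R^{(\omega)}\overset{\iota}{\to}R^{\omega}\to R^{\omega}/R^{(\omega)}\to 0$ and using that $\Ext_R^1(R^{\omega}/R^{(\omega)},M)=0$ when $M$ is cotorsion yields the surjectivity of $\Hom_R(\iota,M)$.

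For the converse, assume $\Hom_R(\iota,M)$ is surjective; the goal is $\Ext_R^1(F,M)=0$ for every flat module $F$. Since $R$ is countable, every flat module is filtered by countably presented flat modules (again because a pure-epimorphic image of a flat module is flat), so by Eklof's lemma it suffices to treat a countably presented flat module $F$, and I may assume $F\neq 0$. Such an $F$ is the direct limit of a chain $(F_n,f_{kn}:F_n\to F_k\mid n\le k<\omega)$ of non-zero finitely generated free modules. Writing $0\to F'\to\bigoplus_{n<\omega}F_n\overset{\pi}{\to}F\to 0$ for the canonical presentation and using that $\bigoplus_{n<\omega}F_n$ is free, the associated long exact sequence identifies $\Ext_R^1(F,M)$ with $\Coker\bigl(\Hom_R(\bigoplus_{n<\omega}F_n,M)\to\Hom_R(F',M)\bigr)$; so I must show that every $\varphi\in\Hom_R(F',M)$ extends along the inclusion $F'\hookrightarrow\bigoplus_{n<\omega}F_n$.

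Here Proposition~\ref{p:embed} enters, applied to the chain $(F_n)$ with $\mu=\omega$ (so that $\prod^{<\omega}_{n<\omega}F_n=\bigoplus_{n<\omega}F_n$). It provides a monomorphism $\rho_0:\bigoplus_{n<\omega}F_n\to\prod_{n<\omega}F_n$, with $\rho_0(x)(k)=f_{kn}(x)$ for $x\in F_n$, $k\ge n$, and $\rho_0(x)(k)=0$ otherwise, such that the left-hand square of its diagram commutes and $\rho_0(F')\subseteq\bigoplus_{n<\omega}F_n$. A one-line computation on the generators $x-f_{n+1,n}(x)$ ($x\in F_n$) of $F'$---each sent by $\rho_0$ to the element supported at coordinate $n$ with value $x$---shows that $\rho_0$ restricts to an \emph{isomorphism} $r:F'\to\bigoplus_{n<\omega}F_n$. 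On the other hand, grouping the (finite, and from some point on non-zero) ranks of the $F_n$ into blocks identifies the inclusion $\bigoplus_{n<\omega}F_n\hookrightarrow\prod_{n<\omega}F_n$, up to isomorphism of arrows, with $\iota:R^{(\omega)}\hookrightarrow R^{\omega}$; so by hypothesis every homomorphism $\bigoplus_{n<\omega}F_n\to M$ extends to $\prod_{n<\omega}F_n$. Now, given $\varphi:F'\to M$, first extend $\varphi\circ r^{-1}:\bigoplus_{n<\omega}F_n\to M$ to $\psi:\prod_{n<\omega}F_n\to M$; then $\psi\circ\rho_0:\bigoplus_{n<\omega}F_n\to M$ restricts to $\varphi$ on $F'$, by commutativity of the square in Proposition~\ref{p:embed}. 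This gives $\Ext_R^1(F,M)=0$ and completes the argument.

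The steps I expect to require genuine care are the two normalisations in the converse---reducing cotorsionness to a test against countably presented flat modules (where the countability of $R$ is used) and passing to a chain of finitely generated \emph{free} modules, so that $\prod_{n<\omega}F_n\cong R^{\omega}$---together with the bookkeeping needed to recognise both $F'\hookrightarrow\bigoplus_{n<\omega}F_n$ (via $\rho_0$) and $\bigoplus_{n<\omega}F_n\hookrightarrow\prod_{n<\omega}F_n$ (via block reindexing) as copies of $\iota$. Once these are in place, the concluding diagram chase is purely formal, and the flatness of $R^{\omega}$ is needed only in the forward implication.
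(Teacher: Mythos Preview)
Your argument is correct. The forward implication matches the paper's exactly. For the converse, however, you take a genuinely different route.

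The paper argues by contrapositive: starting from a countably presented flat $F$ with $\Ext^1_R(F,M)\neq 0$, it uses Proposition~\ref{p:embed} to pure-embed $F$ into $R^\omega/R^{(\omega)}$ and then forms the \emph{pullback} $N$ along the projection $R^\omega\to R^\omega/R^{(\omega)}$. Since $N$ is a countably presented pure submodule of the flat Mittag--Leffler module $R^\omega$, it is flat Mittag--Leffler and hence projective; comparing $\Ext^1_R(N,M)=0$ with $\Ext^1_R(F,M)\neq 0$ produces a map $R^{(\omega)}\to M$ that does not extend to $R^\omega$. Your approach is direct: from the canonical presentation $0\to F'\to\bigoplus_n F_n\to F\to 0$ you exploit the pleasant, $\omega$-specific fact that $\rho_0$ restricts to an \emph{isomorphism} $F'\xrightarrow{\ \sim\ }\bigoplus_n F_n$ (sending $x-f_{n+1,n}(x)$ to the element supported at coordinate $n$), then identify the natural inclusion $\bigoplus_n F_n\hookrightarrow\prod_n F_n$ with $\iota$ by block reindexing, and read off the extension.

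What each buys: your argument is more elementary (no pullback) and, as you note, does not use the Mittag--Leffler hypothesis on $R^\omega$ at all---so it actually establishes the converse implication under the weaker assumption that $R$ is merely countable. The paper's pullback route, on the other hand, is more in keeping with the surrounding machinery and makes transparent where the Mittag--Leffler condition enters in its formulation.
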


\begin{proof} First, if $M$ is cotorsion, $R^\omega/R^{(\omega)}$ flat yields $\Ext^1_R(R^\omega/R^{(\omega)},M) = 0$. Thus $\Hom_R(\iota,M)$ is onto.

Now assume that $M$ is not a cotorsion module. Since $R$ is countable, there exists a countably presented flat module $F$ such that $\Ext^1_R(F,M)\neq 0$. By Lazard's theorem, $F$ is the direct limit of a countable well-ordered system of free modules of finite rank. Hence, it is a pure submodule in $R^\omega/R^{(\omega)}$ by Proposition~\ref{p:embed}. Let $\sigma$ denote this pure embedding, and let $\pi:R^\omega \to R^\omega/R^{(\omega)}$ be the canonical (pure) epimorphism.

Forming the pullback of $\sigma$ and $\pi$,

$$\begin{CD} 
    @. @. 0 @. 0 @. \\
    @. @. @AAA  @AAA @.  \\
	  @. @. \Coker(\varepsilon) @= \Coker(\sigma) @. \\
    @. @.  @AAA @AAA  @. \\
	0 @>>> R^{(\omega)} @>{\iota}>>	R^\omega @>{\pi}>> R^\omega/R^{(\omega)} @>>> 0 	\\
	@. @|			@A{\varepsilon}AA	  @A{\sigma}AA   @. \\
	0 @>>> R^{(\omega)} @>{\subseteq}>>	N @>{}>> F  @>>> 0 \\
	@. @.			@AAA	  @AAA   @. \\
    @. @. 0 @. \hbox{ }0, @.
\end{CD}$$
we see that $N$ is isomorphic to a pure submodule of a flat Mittag--Leffler module, hence it is flat and Mittag--Leffler. Moreover, $N$ is countably presented, and so it is a projective module; in particular, $\Ext^1_R(N,M) = 0$. Since $\Ext^1_R(F,M)\neq 0$, there is a homomorphism $h:R^{(\omega)} \to M$ which cannot be extended to an element of $\Hom _R(N,M)$, from which it readily follows that there is no extension $R^\omega \to M$ of $h$ either.
\end{proof}

\begin{rem} The countable unital rings $R$ for which $R^\omega$ is flat and Mittag--Leffler are precisely the left coherent ones satisfying the additional condition that intersections of finitely generated left submodules of ${}_RR^{(\omega)}$ are finitely generated (cf. \cite[Theorem~4.7]{HT}). In particular, all countable left noetherian unital rings satisfy the hypothesis of Theorem~\ref{t:Berg}.

The hypothesis on the cardinality of the ring is necessary. As a counterexample, take $R = \End_K(K^{(\omega)})$ where $K$ is a field. This is a self-injective von Neumann regular ring which is not right hereditary. At the same time $R\cong R^\omega$, and so the projective dimension of $R^\omega/R^{(\omega)}$ is at most one. It follows that there exists a~non-cotorsion module $M$ such that $\Ext_R^1(R^\omega/R^{(\omega)},M) = 0$.
\end{rem}

\medskip

\noindent\emph{The construction.} Assume we are given a well-ordered direct system $\mathcal C = (C_\alpha,f_{\beta\alpha}:C_\alpha\to C_\beta \mid \alpha\leq\beta<\mu)$ indexed by an infinite regular cardinal $\mu$ (as in the statement of Proposition~\ref{p:embed}), and a cardinal $\lambda$ \st $\lambda^{<\mu} = \lambda$. Then we use a simple

\begin{lem}\label{l:bigtree} There is a $T\subset{}^\mu \lambda$ such that $|T| = \lambda^\mu$ and each two distinct elements $\eta,\zeta\in T$ coincide on an initial segment of $\mu$, i.e., $\Dom(\eta\cap\zeta)\in\mu$. \end{lem}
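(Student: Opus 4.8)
The plan is to realise the required family $T$ as the image of the whole set ${}^\mu\lambda$ under a suitable ``initial-segment encoding'' map, using the hypothesis $\lambda^{<\mu}=\lambda$ precisely to guarantee that such a map lands back inside ${}^\mu\lambda$. Concretely, I would first fix a bijection $c\colon {}^{<\mu}\lambda\to\lambda$ (this is exactly what $\lambda^{<\mu}=\lambda$ provides; note also that $\lambda^{<\mu}=\lambda$ forces $\lambda\geq\mu$, so no degenerate cases arise). For $f\in{}^\mu\lambda$ define $\bar f\in{}^\mu\lambda$ by $\bar f(\alpha)=c(f\restriction\alpha)$ for all $\alpha<\mu$, and put $T=\{\bar f\mid f\in{}^\mu\lambda\}\subseteq{}^\mu\lambda$.

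The cardinality is immediate once injectivity of $f\mapsto\bar f$ is observed: from $\bar f$ one recovers, for each $\alpha<\mu$, the sequence $f\restriction\alpha=c^{-1}(\bar f(\alpha))$, hence $f$ itself. Therefore $|T|=|{}^\mu\lambda|=\lambda^\mu$, which is the first assertion.

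For the branching property one must be a little careful: what is needed is that the \emph{agreement set} $\{\alpha<\mu\mid\bar f(\alpha)=\bar g(\alpha)\}$ of two distinct members is a proper \emph{initial segment} of $\mu$, not merely a non-cofinal subset. Given $\bar f\neq\bar g$, injectivity of $f\mapsto\bar f$ yields $f\neq g$; let $\alpha<\mu$ be least with $f(\alpha)\neq g(\alpha)$. For every $\beta\leq\alpha$ we have $f\restriction\beta=g\restriction\beta$, hence $\bar f(\beta)=\bar g(\beta)$; and for every $\beta$ with $\alpha<\beta<\mu$ we have $f\restriction\beta\neq g\restriction\beta$ (they already disagree at coordinate $\alpha$), hence $\bar f(\beta)=c(f\restriction\beta)\neq c(g\restriction\beta)=\bar g(\beta)$ by injectivity of $c$. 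Thus $\bar f$ and $\bar g$ coincide exactly on the ordinal $\alpha+1$, and since $\mu$ is a limit ordinal we get $\alpha+1<\mu$, i.e.\ $\Dom(\bar f\cap\bar g)=\alpha+1\in\mu$, as required.

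I do not expect a genuine obstacle here; the only real point is the design of the encoding, so that \emph{once two branches split they remain split forever after}, which is precisely what composition with the injection $c$ buys us, together with the observation that $\lambda^{<\mu}=\lambda$ is exactly the condition needed for $\bar f$ to be a well-defined element of ${}^\mu\lambda$ rather than of some larger function space.
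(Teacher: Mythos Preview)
Your proof is correct and follows essentially the same approach as the paper: both fix a bijection ${}^{<\mu}\lambda\to\lambda$ (your $c$, the paper's $\iota$) and define $T$ as the image of ${}^\mu\lambda$ under the map $f\mapsto(\alpha\mapsto c(f\restriction\alpha))$. Your argument is in fact a touch more careful than the paper's, which claims the agreement set is $\alpha$ rather than $\alpha+1$; you correctly compute it as $\alpha+1$ and explicitly invoke that $\mu$ is a limit ordinal to conclude $\alpha+1\in\mu$.
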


\begin{proof} We embed ${}^\mu\lambda$ into ${}^\mu({}^{<\mu}\lambda)$ via the assignment $\nu:\eta\mapsto (\alpha\mapsto \eta\restriction\alpha)$. Using the assumption on $\lambda$, we can fix a bijection $\iota:{}^{<\mu}\lambda\to \lambda$. We define $T$ as the set $\{\iota\circ(\nu(\eta)) \mid \eta\in {}^\mu\lambda\}$. Now, for two distinct $\eta,\zeta\in {}^\mu\lambda$, we consider the least $\alpha$ \st $\eta(\alpha)\neq\zeta(\alpha)$. From the definition of $\nu$, it follows that $\Dom(\nu(\eta)\cap\nu(\zeta)) = \alpha$, whence also $\Dom(\iota\circ(\nu(\eta))\cap\iota\circ(\nu(\zeta))) = \alpha$.
\end{proof}

Elements of the set $T$ we have obtained in this way form branches of length $\mu$ of the forest $\bigcup T$.\footnote{The partial order is defined by $(\alpha,\beta)<(\gamma,\delta) \Leftrightarrow \alpha < \gamma \,\&\, (\exists \eta\in T)(\eta(\alpha) = \beta \,\&\, \eta(\gamma) = \delta$).} We are going to decorate these branches uniformly using our well-ordered direct system $\mathcal C$. This is done via an enhancement of Proposition~\ref{p:embed}. Recall that we view the elements of $T$ as subsets of $\mu\times \lambda$.

Set $C=\varinjlim \mathcal C$. For all $(\alpha,\beta)\in\bigcup T$, put $C_{\alpha,\beta} = C_\alpha$. We have the following commutative diagram with exact rows (where $\pi$ denotes the canonical projection)

$$\begin{CD}
	  0			@>>> \prod_{(\alpha,\beta)\in\bigcup T}^{<\mu} C_{\alpha,\beta} @>{\subseteq_*}>> \prod_{(\alpha,\beta)\in\bigcup T} C_{\alpha,\beta} @ >\pi>> \Img(\pi) @>>> 0\\	
	 @. 			@AAA 		@A{\rho}AA  @A{\sigma}AA  @.		\\
	0	@>{}>>	F^{(T)}  @>{\subseteq_*}>>  (\bigoplus _{\alpha<\mu} C_\alpha)^{(T)}  @>{\tau}>> C^{(T)} @>>> 0
\end{CD}$$
where the bottom row is just a coproduct of the one in Proposition~\ref{p:embed}, and $\rho$ is defined as follows: for each $\eta\in T$, let $\nu_\eta:\bigoplus _{\alpha<\mu} C_\alpha \to (\bigoplus _{\alpha<\mu} C_\alpha)^{(T)}$ be the canonical embedding onto the $\eta$th coordinate. For $\alpha<\mu$ and $x\in C_\alpha$, we set $\rho\nu_\eta(x)=k$ where $k(\beta,\eta(\beta)) = f_{\beta\alpha}(x)$ if $\alpha\leq\beta<\mu$, and $k(\beta,\gamma)=0$ otherwise.

Notice that, for each $\eta\in T$, the diagram from Proposition~\ref{p:embed} embeds into the diagram above. In the second row, this is just the coproduct embedding corresponding to $\eta$. The first row embeds as follows (where $Q = \Img(\pi\restriction \prod_{\alpha<\mu} C_{\alpha,\eta(\alpha)})$):
$$\begin{CD}
	  0			@>>> \prod_{\alpha<\mu}^{<\mu} C_{\alpha,\eta(\alpha)} @>{\subseteq_*}>> \prod_{\alpha<\mu} C_{\alpha,\eta(\alpha)} @ >{\pi\restriction\prod_{\alpha<\mu} C_{\alpha,\eta(\alpha)}}>> Q @>>> 0. \end{CD}$$

Using this observation, from Proposition~\ref{p:embed}, we know that $\rho\nu_\eta$ is a monomorphism for each $\eta\in T$ (since $\rho_0$ is), and also that $\rho\nu_\eta\restriction F$ maps into the $\mu$-bounded product. Thus $\rho\restriction F^{(T)}$ maps into the $\mu$-bounded product as well, whence we get the unique $\sigma$ completing the diagram above.

In what follows, for a class $\mathcal X\subseteq\ModR$, $\hbox{Sum}(\mathcal X)$ denotes the class of all modules isomorphic to a direct sum of modules from $\mathcal X$.

\begin{lem}\label{l:technical} With the notation as above, we have:
\begin{enumerate}
	\item $\sigma$ is a monomorphism.
	\item There is an exact sequence $0 \longrightarrow D \overset{\subseteq}\longrightarrow L \overset{g}\longrightarrow C^{(T)} \longrightarrow 0$ where $L=\Img(\rho)$ and $D=\Ker(\pi\restriction L)$. Moreover, if $\lambda\geq |R|$, and the system $\mathcal C$ consists of $\lambda$-presented modules, then $|D|\leq\lambda$.
  \item For each finite subset $S$ of $T$, the module $L_S = \sum _{\eta\in S} \Img(\rho\nu_{\eta})$ is a direct summand in $L$, and $L_S\in\hbox{\rm Sum}(\{C_\alpha\mid\alpha<\mu\})$. Furthermore, we have $L = \bigcup\,\{L_S\mid S\subset T\hbox{ finite}\,\}$.
  \item For each $S\subseteq T$ with $|S| \leq \mu$, the module $L_S = \sum _{\eta\in S} \Img(\rho\nu_{\eta})$ decomposes as $L_S^\prime\oplus K_S^\prime$ where $g\restriction L_S^\prime = 0$ and $K_S^\prime\in\hbox{\rm Sum}(\{C_\alpha\mid\alpha<\mu\})$.

\end{enumerate}
\end{lem}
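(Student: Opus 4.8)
The plan is to analyze the structure of $L = \Img(\rho)$ by working one branch at a time and then gluing along initial segments, exploiting the forest structure of $\bigcup T$ guaranteed by Lemma~\ref{l:bigtree}. For part~$(1)$, I would argue that $\sigma$ is a monomorphism essentially because $\sigma_0$ is: the bottom row of the big diagram is a coproduct of copies of the row from Proposition~\ref{p:embed}, and the relevant quotient map $C^{(T)}\to\Img(\pi)/(\text{image of }\mu\text{-bounded product})$ restricts on each summand $C$ to the pure monomorphism $\sigma_0$. The key point is that, for distinct $\eta,\zeta\in T$, the supports of $\rho\nu_\eta(x)$ and $\rho\nu_\zeta(y)$ inside $\mu\times\lambda$ can only overlap on the common initial segment $\Dom(\eta\cap\zeta)\in\mu$, which has cardinality $<\mu$; hence any element of $L$ whose image in $\Img(\pi)$ lands in the $\mu$-bounded product must, branch by branch, already lie in the $\mu$-bounded product. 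This forces $\Ker(g)$ to be exactly $\rho(F^{(T)})$, and injectivity of $\sigma$ follows.

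For part~$(2)$, the exact sequence is immediate from the diagram once $\sigma$ is known to be monic: $L=\Img(\rho)$, $D=\Ker(\pi\restriction L)=\rho(F^{(T)})$, and $g$ is induced by $\pi$ (identifying $\Img(\sigma)$ with $C^{(T)}$). For the cardinality bound, I would note that $D$ is a quotient of $F^{(T)}$, and $F$ — being the kernel of the canonical presentation $\bigoplus_{\alpha<\mu}C_\alpha\to C$ with $\mathcal C$ consisting of $\lambda$-presented modules over a ring with $|R|\le\lambda$ — has cardinality at most $\lambda$ (it is generated by $\mu\le\lambda$-many relations, each living in a $\lambda$-generated module); combined with $|T|=\lambda^\mu$ one must be careful, but in fact $D$ is the union over finite $S\subset T$ of the $D\cap L_S$, and $F$ itself has size $\le\lambda^{<\mu}=\lambda$ by the assumption on $\lambda$, giving $|D|\le\lambda$ since $D$ is the directed union of finitely-many-branch pieces each of size $\le\lambda$ and there are $\lambda$ of the relevant subsets — here I would invoke $\lambda^{<\mu}=\lambda$ once more to count. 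For part~$(3)$, given a finite $S\subset T$, the branches in $S$ split off a common finite initial segment after which they become pairwise disjoint in $\bigcup T$; one decomposes $L_S$ accordingly, realizing it (via Proposition~\ref{p:embed} applied coordinatewise and the explicit formula for $\rho\nu_\eta$) as a direct sum of copies of the $C_\alpha$'s, and the fact that each $L_S$ is a direct summand in $L$ follows because the complement $\sum_{\eta\in T\setminus S}\Img(\rho\nu_\eta)$ can be split off using disjointness of supports outside the finite segment. That $L=\bigcup\{L_S\mid S\subset T\text{ finite}\}$ is clear from $L=\Img(\rho)$ and $\rho$ being defined on the coproduct $(\bigoplus_{\alpha<\mu}C_\alpha)^{(T)}$.

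For part~$(4)$, which I expect to be the main obstacle, the idea is the same in spirit as $(3)$ but now $S$ may be infinite of size $\le\mu$: the branches in $S$ no longer share a single common initial segment, so the naive splitting fails. Instead I would stratify $\bigcup_{\eta\in S}\eta$ by levels and use that at each level $\beta<\mu$ only $\le\mu$ many nodes occur, organizing the branches into a subtree of size $\le\mu$; then I would build $L_S'$ as the part of $L_S$ supported on "the $\mu$-bounded-product direction" — concretely, $L_S'$ should be $\Img(\rho)\cap(\text{the }\mu\text{-bounded product over }\bigcup S)$, which is precisely where $g$ vanishes — and $K_S'$ a complement obtained by choosing, coherently along the tree, splittings of the short exact sequences from Proposition~\ref{p:embed} restricted to each branch. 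The coherence is the delicate step: one needs the splittings to agree on overlapping initial segments, which I would arrange by transfinite recursion on the levels $\beta<\mu$, at each stage extending the partial splitting using that the newly added part is a sum of $C_\alpha$'s mapping isomorphically onto the corresponding part of $C^{(S)}$. Regularity of $\mu$ ensures the recursion closes up at limit stages, and the resulting $K_S'$ is a direct sum of $C_\alpha$'s with $g\restriction K_S'$ an isomorphism onto $C^{(S)}$, while $L_S'=\Ker(g\restriction L_S)$ absorbs the rest.
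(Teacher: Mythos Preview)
Your approach to part~(1) is correct and matches the paper's. For the cardinality bound in~(2), however, your argument via $F^{(T)}$ does not close: $|T|=\lambda^\mu>\lambda$, so there are far more than $\lambda$ finite subsets of $T$, and the counting you sketch does not give $|D|\le\lambda$. The paper's argument is a one-liner you missed: $D$ sits inside the $\mu$-bounded product $\prod^{<\mu}_{(\alpha,\beta)\in\bigcup T}C_{\alpha,\beta}$, and since $\bigcup T\subseteq\mu\times\lambda$ has size $\le\lambda$, each $C_{\alpha,\beta}$ has size $\le\lambda$, and $\lambda^{<\mu}=\lambda$, the whole bounded product has cardinality $\le\lambda$.

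In part~(3), your proposed complement $\sum_{\eta\in T\setminus S}\Img(\rho\nu_\eta)$ is \emph{not} a complement of $L_S$: if $\zeta\in T\setminus S$ agrees with some $\eta\in S$ up to level $\gamma$, then for $\alpha<\gamma$ and $x\in C_\alpha$ the element $\rho\nu_\zeta(x-f_{\gamma\alpha}(x))=\rho\nu_\eta(x-f_{\gamma\alpha}(x))$ lies in both summands. The paper handles this by fixing an enumeration of $S$ and defining, for every $\zeta\in T$, a level $\psi(\zeta)<\mu$ at which $\zeta$ first leaves all previously enumerated branches; the correct complement is $K_S=\sum_{\zeta\in T\setminus S}\rho\nu_\zeta(\bigoplus_{\psi(\zeta)\le\alpha<\mu}C_\alpha)$, and verifying $L=L_S\oplus K_S$ then requires an explicit rewriting of $\rho\nu_\zeta(y)$ for $y\in C_\alpha$ with $\alpha<\psi(\zeta)$ along the finite chain of branches in $S$ it successively coincides with.

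In part~(4) there is a genuine error of strategy. You take $L_S'=\Ker(g\restriction L_S)$ and want $K_S'$ to map \emph{isomorphically} onto $C^{(S)}$ under $g$; but this would split the sequence $0\to\Ker(g\restriction L_S)\to L_S\to C^{(S)}\to 0$, which already for $|S|=1$ is isomorphic to $0\to F\to\bigoplus_{\alpha<\mu}C_\alpha\to C\to 0$ and does not split in general. The lemma only asks that $g\restriction L_S'=0$, not that $L_S'$ be the full kernel. The paper's $L_S'$ is strictly smaller: enumerating $S=\{\eta_\gamma\mid\gamma<\kappa\}$ and letting $\psi(\gamma)$ be the least level at which $\eta_\gamma$ leaves $\bigcup_{\delta<\gamma}\eta_\delta$ (this uses $\kappa\le\mu$ and regularity of $\mu$), one sets $K_S'=\bigoplus_\gamma\rho\nu_{\eta_\gamma}(\bigoplus_{\psi(\gamma)<\alpha<\mu}C_\alpha)$ directly and checks that its complement in $L_S$ lies in the $\mu$-bounded product. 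No transfinite recursion on levels and no ``coherent splittings'' are needed --- and indeed the latter cannot exist.
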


\begin{proof} $(1)$. Pick an arbitrary element $x = x_1+\dotsb+x_n\in (\bigoplus _{\alpha<\mu} C_\alpha)^{(T)}$ \st $0\neq x_i=\nu_{\eta_i}(y_i)$, for all $i=1,\dotsc, n$, where $\eta_1,\dotsc, \eta_n\in T$ are pairwise distinct. Assume that $\rho(x)$ belongs to the $\mu$-bounded product. Then there is a~ $\gamma<\mu$ such that $\eta_1(\gamma),\dotsc, \eta_n(\gamma)$ are pairwise distinct and $\rho(x)(\beta,\eta_i(\beta)) = 0$ for each $i = 1,\dotsc, n$ and $\beta\geq\gamma$. The former condition and the way $\rho$ is defined imply that the latter condition can be rephrased as $\rho\nu_{\eta_i}(y_i)(\beta,\eta_i(\beta))=0$ for each $i = 1,\dotsc, n$ and $\beta\geq\gamma$.

Since $\sigma_0$ from Proposition~\ref{p:embed} is a monomorphism, we infer (using the observation with the embedding of diagrams) that $y_i\in F$ for each $i = 1,\dotsc, n$. It follows that $x\in F^{(T)}$. Since $x$ was arbitrary, we conclude that $\sigma$ is a monomorphism.

\smallskip

The first part of $(2)$ follows from $(1)$ by putting $g = \sigma^{-1}(\pi\restriction L)$ where $\sigma^{-1}$ is the inverse of the isomorphism $\sigma:C^{(T)}\to \Img(\sigma)$. Let us prove the moreover clause.

Since $\mu\leq\lambda$ and $\bigcup T\subseteq \mu\times\lambda$, we have $|\bigcup T|\leq \lambda$. By the assumption, the cardinality of modules in the system $\mathcal C$ is at most $\lambda$. The assumption $\lambda = \lambda^{<\mu}$ then implies that the cardinality of the $\mu$-bounded product is at most $\lambda$, too. Thus $|D|\leq\lambda$, since $D$ is a submodule in the $\mu$-bounded product.

\smallskip

$(3)$. Let $S=\{\eta_i \mid i < |S|\}$ be a finite subset in $T$. Put $n = |S|$. For each $\zeta\in T\setminus S$, let $\psi(\zeta)$ denote the least ordinal $\beta<\mu$ \st $(\beta,\zeta(\beta))\not\in\bigcup S$. For $i< n$, we put $\psi(\eta_i) = \min\{\beta<\mu\mid (\beta,\eta_i(\beta))\not\in\bigcup_{j<i}\eta_j\}$. We claim that
$$L = \Biggl(\;\bigoplus_{\eta\in S}\rho\nu_{\eta}(\bigoplus _{\psi(\eta)\leq\alpha<\mu} C_\alpha)\Biggr) \oplus \sum _{\zeta\in T\setminus S}\rho\nu_\zeta(\bigoplus _{\psi(\zeta)\leq\alpha<\mu}C_\alpha).\eqno{(*)}$$

The independence of the summands is clear since they occupy different canonical direct summands in the product $\prod_{(\alpha,\beta)\in\bigcup T}C_{\alpha,\beta}$. Let us denote the second term of the decomposition $(*)$ by $K_S$.

We have to show that, for every $\zeta\in T, \alpha<\mu$ and $y\in C_\alpha$, the element $x = \rho\nu_\zeta(y)$ can be written as $x^\prime + \sum_{i< n}\rho\nu_{\eta_i}(y_i)$ where $x^\prime\in K_S$ and $y_i\in\bigoplus_{\psi(\eta_i)\leq\alpha<\mu} C_\alpha$ for all $i<n$. This is clear if $\alpha\geq\psi(\zeta)$. Otherwise, we define an increasing (finite) sequence in $n=\{0,1,\dotsc,n-1\}$ as follows:

\smallskip

Set $m_0 = \min\{j<n\mid \eta_j(\alpha)=\zeta(\alpha)\}$. If $m_i$ is defined and $\beta_i = \Dom(\eta_{m_i}\cap\zeta)<\psi(\zeta)$, put $m_{i+1} = \min\{j<n \mid \eta_j(\beta_i)=\zeta(\beta_i)\}$. Set $\beta_{-1} = \alpha$, and let $k$ be the greatest \st $m_k$ is defined. Notice that $\psi(\eta_{m_{i+1}})=\beta_i$ for $0\leq i<k$, and $\psi(\eta_{m_0})\leq\alpha$. From the definition of $\rho$, it readily follows that

$$x = \rho\nu_\zeta(f_{\psi(\zeta)\alpha}(y))+\sum _{i=0}^k\rho\nu_{\eta_{m_i}}(f_{\beta_{i-1}\alpha}(y)-f_{\beta_i\alpha}(y)).$$
Note that the first summand is the $x^\prime$ we have looked for if $\zeta\not\in S$; otherwise, $x^\prime = 0$.

Having proved that $L$ decomposes as in $(*)$, we have also shown that $L_S$ is the first term of this decomposition. Thus $L_S$ is a direct summand in $L$ and $K_S$ is its complement. Moreover, since $\rho\nu_\eta$ is a monomorphism for each $\eta\in T$, we can conclude that

$$L_S = \bigoplus_{\eta\in S}\rho\nu_{\eta}(\bigoplus _{\psi(\eta)\leq\alpha<\mu} C_\alpha) \cong \bigoplus_{\eta\in S}\Biggl(\;\bigoplus _{\psi(\eta)\leq\alpha<\mu} C_\alpha\Biggr).$$

\smallskip

$(4)$. Set $\kappa = |S|$ and fix a (non-repeating) enumeration $S = \{\eta_\gamma \mid \gamma<\kappa\}$. For each $\gamma<\kappa$, let $\psi(\gamma)$ denote the least $\beta<\mu$ such that $(\beta,\eta_\gamma(\beta))\not\in\bigcup _{\delta<\gamma}\eta_\delta$. This definition is possible since $\mu$ is a regular cardinal and $\kappa\leq\mu$.

Let $E = \{k\in\prod_{(\alpha,\beta)\in\bigcup S} C_{\alpha,\beta} \mid (\forall \gamma<\kappa)(\forall \delta >\psi(\gamma))\,k(\delta,\eta_\gamma(\delta))=0\}$. Then $L_S^\prime = E\cap L_S$ is contained in the $\mu$-bounded product, hence $g\restriction L_S^\prime = 0$.

On the other hand, the module $$K_S^\prime = \bigoplus_{\gamma<\kappa}\rho\nu_{\eta_\gamma}(\bigoplus _{\psi(\gamma)<\alpha<\mu} C_\alpha) \cong \bigoplus_{\gamma<\kappa}\Biggl(\;\bigoplus _{\psi(\gamma)<\alpha<\mu} C_\alpha\Biggr)$$
is a complement of $L_S^\prime$ in $L_S$. Indeed, we readily check that $L_S^\prime\cap K_S^\prime = 0$, and for each $\gamma<\kappa$, $\alpha\leq\psi(\gamma)$ and $y\in C_\alpha$, we can write $\rho\nu_{\eta_\gamma}(y) = \rho\nu_{\eta_\gamma}(y-f_{(\psi(\gamma)+1)\alpha}(y))+\rho\nu_{\eta_\gamma}(f_{(\psi(\gamma)+1)\alpha}(y))$ where the first term is in $L_S^\prime$ and the second in $K_S^\prime$.
\end{proof}

\begin{rem}\label{r:pure} By the construction, $\tau = g\rho$. It follows that $g$ is a pure epimorphism. In fact, it is even $\mu$-pure since $\tau$ is, i.e., any homomorphism from a $<\mu$-presented module into $C^{(T)}$ factorizes through $\tau$ (and hence through $g$).
\end{rem}

The module $L$ from the short exact sequence in Lemma~\ref{l:technical}$(2)$ is the tree module constructed from the data $\mathcal C, \lambda, T$. Choosing this data a little bit more carefully, we can impose further properties on $L$ and the short exact sequence it fits in.

\begin{prop}\label{p:further} With the notation as above. Let $\theta$ be an infinite cardinal with $\cf(\theta)=\mu$. Assume that $\mathcal C$ consists of $<\theta$-presented modules. Then the following hold.
\begin{enumerate}
	\item The module $L$ is finitely $\theta$-separable.
  \item If $e\in\End_R(L)$ is an idempotent with $e\restriction D = 0$, and $\End_R(\Img(e))$ is a~local ring, then $\Img(e)$ is $<\theta$-presented.
\end{enumerate}
\end{prop}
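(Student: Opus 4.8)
The plan is to obtain (1) by sharpening the decomposition of $L$ from Lemma~\ref{l:technical}(3), and then to deduce (2) quickly from (1) together with the locality of $\End_R(\Img e)$.

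\textbf{Part (1).} By Lemma~\ref{l:technical}(3) we have $L=\bigcup\{L_S\mid S\subset T\text{ finite}\}$, each $L_S$ is a direct summand of $L$, and — fixing an enumeration of $S$, as in the proof of that lemma — $L_S=\bigoplus_{\eta\in S}\rho\nu_\eta\bigl(\bigoplus_{\psi(\eta)\le\alpha<\mu}C_\alpha\bigr)$ with every $\rho\nu_\eta$ a monomorphism. These $L_S$ are in general only $\theta$-presented (they are direct sums of $\mu$ copies of the $C_\alpha$), so they do not yet witness membership in $\mathcal S_\theta$; I would cut them down in the $\mu$-direction. For finite (enumerated) $S\subset T$ and $\beta<\mu$ set
$$N_{S,\beta}:=\bigoplus_{\eta\in S}\rho\nu_\eta\Bigl(\bigoplus_{\psi(\eta)\le\alpha<\beta}C_\alpha\Bigr).$$
Since each $\rho\nu_\eta$ is monic, $\rho\nu_\eta(\bigoplus_{\psi(\eta)\le\alpha<\mu}C_\alpha)$ splits as $\rho\nu_\eta(\bigoplus_{\psi(\eta)\le\alpha<\beta}C_\alpha)\oplus\rho\nu_\eta(\bigoplus_{\beta\le\alpha<\mu}C_\alpha)$; summing over $\eta\in S$ shows $N_{S,\beta}$ is a direct summand of $L_S$, hence of $L$. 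Moreover $N_{S,\beta}$ is a direct sum of at most $|S|\cdot|\beta|<\mu=\cf(\theta)$ modules, each $<\theta$-presented, and a direct sum of fewer than $\cf(\theta)$ many $<\theta$-presented modules is again $<\theta$-presented (the relevant index system has cardinality a supremum of $<\cf(\theta)$ cardinals below $\theta$, hence is below $\theta$). Finally $\bigcup_{\beta<\mu}N_{S,\beta}=L_S$, so $\bigcup_{S,\beta}N_{S,\beta}=L$. The only nontrivial point is that $\{N_{S,\beta}\}$ is directed: given $N_{S_1,\beta_1}$ and $N_{S_2,\beta_2}$, take $S_3=S_1\cup S_2$ with an arbitrary enumeration, put $\beta^\ast=\max\{\Dom(\eta\cap\zeta)\mid\eta,\zeta\in S_3\}<\mu$ and $\beta_3=\max(\beta_1,\beta_2,\beta^\ast+1)$; the explicit rewriting of $\rho\nu_\zeta(y)$ ($\zeta\in S_3$) in terms of the summands indexed by $S_3$ given in the proof of Lemma~\ref{l:technical}(3) only involves modules $C_{\alpha'}$ with $\alpha'\le\max(\alpha,\beta^\ast)<\beta_3$, so $N_{S_1,\beta_1},N_{S_2,\beta_2}\subseteq N_{S_3,\beta_3}$. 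Hence $L$ is the directed union of the $<\theta$-presented direct summands $N_{S,\beta}$, i.e.\ $L\in\mathcal S_\theta$.

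\textbf{Part (2).} Write $P=\Img(e)$, which is nonzero since $\End_R(P)$ is local; let $\pi\colon L\to P$ be the corestriction of $e$ and $\iota\colon P\hookrightarrow L$ the inclusion, so $\pi\iota=\ident_P$. Fix $0\ne x\in P$; by (1) there is a $<\theta$-presented direct summand $N$ of $L$ with $\iota(x)\in N$, say $L=N\oplus N'$ with idempotent projection $p_N\colon L\to L$ onto $N$. Applying $\pi(-)\iota$ to $\ident_L=p_N+(\ident_L-p_N)$ gives $\ident_P=a+b$ in $\End_R(P)$ with $a=\pi p_N\iota$. Since $\iota(x)\in N$ we have $p_N\iota(x)=\iota(x)$, whence $a(x)=x$, i.e.\ $(\ident_P-a)(x)=0$; as $x\ne0$, $\ident_P-a=b$ is not injective, hence not invertible, so $a$ is a unit of the local ring $\End_R(P)$. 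Then $p_N\iota a^{-1}\colon P\to N$ is a split monomorphism with retraction $\pi\restriction N$, so $P$ is isomorphic to a direct summand of $N$; since direct summands of $<\theta$-presented modules are $<\theta$-presented, $P=\Img(e)$ is $<\theta$-presented. (This argument uses only that $\End_R(\Img e)$ is local, not $e\restriction D=0$.)

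The step I expect to be the real work is the refinement in Part (1): Lemma~\ref{l:technical} hands us a decomposition that is one level too coarse, so one has to truncate the summands along $\mu$ and then re-engage with the combinatorics of the forest $\bigcup T$ to see that the finer family $\{N_{S,\beta}\}$ is still directed. After that, Part (2) is just the observation that in a finitely $\theta$-separable module any direct summand with local endomorphism ring already embeds, as a direct summand, into one of the small pieces; by contrast, transporting $\Img(e)$ into $C^{(T)}$ and invoking the exchange property would only show it is a direct summand of $C$, hence merely $\theta$-presented.
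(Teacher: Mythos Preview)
Your proof is correct. Part~(1) follows the same route as the paper---truncating the summands $L_S$ in the $\mu$-direction---and you are in fact more careful than the paper about why the truncated family is directed (the paper simply asserts that the refined system works).

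Part~(2), however, is a genuinely different and cleaner argument. The paper uses the hypothesis $e\restriction D=0$ in an essential way: it factors $e$ through $g\colon L\to C^{(T)}$, then exploits the coproduct decomposition of $C^{(T)}$ to find a finite $S$ with $K_S\subseteq\Ker(e)$, concluding that $\Img(e)$ is a summand of $L_S$ and hence of a direct sum of $<\theta$-presented modules. You bypass all of this: having already established in~(1) that $L$ is finitely $\theta$-separable, you observe directly that any nonzero direct summand of $L$ with local endomorphism ring must split off one of the small pieces $N_{S,\beta}$. Your argument therefore proves the stronger statement that \emph{every} direct summand of $L$ with local endomorphism ring is $<\theta$-presented, with no reference to $D$ or $g$; the hypothesis $e\restriction D=0$ is, as you note, superfluous. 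The paper's route is tailored to the way~(2) is actually invoked in Theorem~\ref{t:factor} (where the idempotent does factor through $g$), but your version is the more natural companion to~(1) and shows that finite $\theta$-separability alone carries the weight.
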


\begin{proof} $(1)$. By Lemma~\ref{l:technical}$(3)$, the module $L$ is the directed union of the system $(L_S\mid S\subset T\hbox{ finite})$. We replace this system by $(L_{S,\beta} \mid \gamma_S\leq\beta<\mu, S\subset T\hbox{ finite})$ defined as follows:

Let a finite $S\subset T$ be fixed, and let $\psi: T\to \mu$ be defined as in the proof of Lemma~\ref{l:technical}$(3)$. Put $\gamma _S = \max\{\psi(\eta)\mid \eta\in S\}$ and

$$L_{S,\beta} = \bigoplus_{\eta\in S}\rho\nu_{\eta}(\bigoplus _{\psi(\eta)\leq\alpha\leq\beta} C_\alpha) \cong \bigoplus_{\eta\in S}\Biggl(\;\bigoplus _{\psi(\eta)\leq\alpha\leq\beta} C_\alpha\Biggr).$$
This is a direct sum of $<\cf(\theta)$ of $<\theta$-presented modules, hence it is $<\theta$-presented. Moreover, $L_{S,\beta}$ is also a direct summand in $L_S$ with the complement $K_{S,\beta}$ where
$$K_{S,\beta}=\bigoplus_{\eta\in S}\rho\nu_{\eta}(\bigoplus _{\beta<\alpha<\mu} C_\alpha)\cong (\bigoplus _{\beta<\alpha<\mu} C_\alpha)^{S}.$$
We have proved $(1)$, since $L_S$ splits in $L$ by Lemma~\ref{l:technical}$(3)$.

\smallskip

For $(2)$, put $H = \Img(e)$, and let $h:C^{(T)}\to H$ be the epimorphism \st $hg = e$. We can assume that $H\neq 0$; otherwise the conclusion trivially holds.

Pick any non-zero $u\in H$. Then there is a finite subset $S$ of $T$ \st $e(u)=h\varepsilon_S\pi_Sg(u) = u$ where $\pi_S$ and $\varepsilon_S$ are the canonical projection, embedding resp., between $C^{(T)}$ and $C^S$. Using that $\End_R(H)$ is local, we see that $h\varepsilon_S\pi_Sg\restriction H$ is an automorphism of $H$. Thus we can w.l.o.g.\ assume that $e$ factorizes through $\pi_Sg$.


From Lemma~\ref{l:technical}$(3)$, we know that $L = L_S\oplus K_S$ where $K_S\subseteq \sum_{\zeta\in T\setminus S}\Img(\rho\nu_\zeta)$. We readily check that $K_S\subseteq\Ker(\pi_Sg)\subseteq\Ker(e)$. It follows that id${}_H = e\restriction H$ factorizes through $L/K_S\cong L_S$ which is a direct sum of $<\theta$-presented modules. So $H$ is a direct summand in a direct sum of $<\theta$-presented modules. However, $\End_R(H)$ is local whence $H$ must be $<\theta$-presented itself.
\end{proof}

\begin{rem}\label{r:rel} Fix a class $\mathcal F$ of finitely presented modules \st $\mathcal F$ is closed under finite direct sums. We can relativize the construction in this section to the subcategory $\varinjlim \mathcal F$ as follows.

Assume that $C\in\varinjlim \mathcal F$. By \cite[Lemma 2.13]{GT}, $\varinjlim \mathcal F$ is closed under taking direct limits and pure submodules. It follows that, once we choose the modules $C_\alpha, \alpha<\mu$, from the class $\varinjlim\mathcal F$ (which can be easily done, cf. the proof of Lemma~\ref{l:present}), the modules $D$ and $L$ belong to $\varinjlim\mathcal F$ as well---use Remark~\ref{r:pure} and Lemma~\ref{l:technical}$(3)$.
\end{rem}

\section{The main theorem}
\label{sec:main}

We start with a general observation.

\begin{lem}\label{l:Hun} Let $\lambda$ be an infinite cardinal, $0\rightarrow D \rightarrow L \overset{g}\rightarrow C^{(2^\lambda)} \rightarrow 0$ a short exact sequence, and $f:B \to Y$ be an epimorphism. Assume that $|\Ker(f)|\leq 2^\lambda$ and $|D|\leq\lambda$. Then the following holds.
$$\Hom_R(C,f)\hbox{ is onto \iff }\Img(\Hom_R(g,Y))\subseteq \Img(\Hom_R(L,f)).$$

Moreover, the group $\,\Img(\Hom_R(g,Y))\, / \Img(\Hom_R(L,f))\cap \Img(\Hom_R(g,Y))$ is either trivial or of cardinality $\geq 2^{2^\lambda}$.
\end{lem}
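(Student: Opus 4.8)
The plan is to set up a Hunter-style cardinal counting argument on the cokernel group $Q := \Img(\Hom_R(g,Y))\,/\,(\Img(\Hom_R(L,f))\cap\Img(\Hom_R(g,Y)))$. First I would establish the easy direction of the equivalence: if $\Hom_R(C,f)$ is onto, then composing with the iterated epimorphism $C^{(2^\lambda)}\to C^{(2^\lambda)}$ componentwise shows $\Hom_R(C^{(2^\lambda)},f)$ is onto, and since every element of $\Img(\Hom_R(g,Y))$ is of the form $\phi g$ for $\phi\in\Hom_R(C^{(2^\lambda)},Y)$, one lifts $\phi$ to $B$ and factors through $g$; thus $Q=0$. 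Conversely, if $Q=0$ then $\Img(\Hom_R(g,Y))\subseteq\Img(\Hom_R(L,f))$ by definition, which is the right-hand condition; and this inclusion will be shown to force $\Hom_R(C,f)$ onto by a diagram chase, using that $g$ is an epimorphism and picking out a single coordinate of $C^{(2^\lambda)}$.

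The heart of the matter is the dichotomy: $Q$ is either trivial or has cardinality $\geq 2^{2^\lambda}$. Here I would argue by contradiction, assuming $0<|Q|<2^{2^\lambda}$, i.e. $Q\neq 0$ but also that the inclusion $\Img(\Hom_R(g,Y))\subseteq\Img(\Hom_R(L,f))$ fails "only on a small set". The idea is that a homomorphism $\phi\in\Hom_R(C^{(2^\lambda)},Y)$ is determined (modulo $\Img(\Hom_R(L,f))$, after composing with $g$) by a family indexed by the $2^\lambda$ coordinates, and whether $\phi g$ lifts through $f$ to a map $L\to B$ is controlled by the obstruction landing in $\Hom_R(D,\Ker f)$-type data — a set of size at most $|D|^{|\Ker f|}\le \lambda^{2^\lambda}=2^{2^\lambda}$, but one partitions the $2^\lambda$ index set so that the relevant obstruction only depends on $\lambda$-much information per coordinate. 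The counting trick (as in \cite{AST}) is: if $Q$ were nonzero but of size $<2^{2^\lambda}$, then by a pigeonhole/$\Delta$-system argument over the $2^\lambda=|{}^\lambda 2|$ coordinates, one can find two coordinates whose "local obstructions" agree, and then a suitable difference/swap of maps produces a new element of $\Img(\Hom_R(g,Y))$ not in $\Img(\Hom_R(L,f))$, which can be independently perturbed over a set of $2^\lambda$ indices, yielding $2^{2^\lambda}$ distinct classes in $Q$ — contradiction. So nonzero forces full size.

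Concretely I would: (i) fix one $\phi_0\in\Hom_R(C^{(2^\lambda)},Y)$ with $\phi_0 g\notin\Img(\Hom_R(L,f))$ (exists since $Q\ne 0$); (ii) for each subset $A\subseteq 2^\lambda$, let $\phi_A$ be $\phi_0$ on coordinates in $A$ and $0$ elsewhere, so each $\phi_A g\in\Img(\Hom_R(g,Y))$; (iii) show that for distinct $A,A'$ the classes $[\phi_A g],[\phi_{A'}g]$ in $Q$ differ — this is where $|D|\le\lambda$ and $|\Ker f|\le 2^\lambda$ enter: any putative witness $t\in\Hom_R(L,B)$ with $ft$ agreeing with $\phi_A g-\phi_{A'}g$ on $\Img g$ would have to restrict on $D$ to a map into $\Ker f$, and the $L_S$-decomposition of Lemma~\ref{l:technical}$(3)$ localizes this so that $\phi_{A\triangle A'}g$ lifts iff it does on each finite-coordinate piece, contradicting the choice of $\phi_0$ on the symmetric difference; (iv) conclude $|Q|\ge 2^{|2^\lambda|}=2^{2^\lambda}$.

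The main obstacle I anticipate is step (iii): making precise the "localization" of the lifting obstruction along the decomposition $L=\bigcup\{L_S\}$ and $L=L_S\oplus K_S$, so that a global non-lifting at $\phi_0$ propagates to non-lifting of all the $\phi_A g$ for $A$ infinite, and verifying that the cardinal bookkeeping ($|D|^{|\Ker f|}$ versus $2^{2^\lambda}$, and the $2^{2^\lambda}$-many subsets $A$) actually closes. One must be careful that a lift need only agree with $\phi_A g$ up to elements that $f$ kills, and that the number of "allowed corrections" is bounded by $|{}^{|D|}(\Ker f)|\le 2^{2^\lambda}$, so that if $|Q|<2^{2^\lambda}$ a collision among the $\phi_A$ is forced — this is the crux of the Hunter counting argument and the only genuinely delicate point.
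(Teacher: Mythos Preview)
Your easy direction is fine, but the dichotomy argument has two genuine gaps.

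\textbf{First, the cardinal arithmetic is backwards.} You bound the space of ``allowed corrections'' $\Hom_R(D,\Ker f)$ by $|D|^{|\Ker f|}\le\lambda^{2^\lambda}=2^{2^\lambda}$, and later by $|{}^{|D|}(\Ker f)|\le 2^{2^\lambda}$. But the set of functions $D\to A$ (where $A=\Ker f$) has cardinality at most $|A|^{|D|}\le(2^\lambda)^\lambda=2^\lambda$, not $2^{2^\lambda}$. This is precisely the discrepancy that makes Hunter's trick work: you need the obstruction space to be \emph{strictly smaller} than $2^{2^\lambda}$, and with your stated bound there is no discrepancy at all, so no pigeonhole argument can close.

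\textbf{Second, you are importing structure on $L$ that the lemma does not assume.} In step (iii) you invoke the decomposition $L=L_S\oplus K_S$ of Lemma~\ref{l:technical}(3) to ``localize'' the lifting obstruction to finite coordinate pieces. But Lemma~\ref{l:Hun} is stated for an \emph{arbitrary} short exact sequence $0\to D\to L\to C^{(2^\lambda)}\to 0$ with $|D|\le\lambda$; there is no tree-module structure on $L$ here, and no $L_S$ decomposition is available. Your argument that ``a global non-lifting at $\phi_0$ propagates to non-lifting of all the $\phi_A g$'' therefore has no foundation in this generality.

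The paper's proof avoids both pitfalls by working directly with the $\Hom$/$\Ext$ diagram obtained by applying $\Hom_R(-,B)\to\Hom_R(-,Y)\to\Ext^1_R(-,A)$ to the sequence $0\to D\to L\to C^{(2^\lambda)}\to 0$. The connecting map $\varepsilon:\Hom_R(C^{(2^\lambda)},Y)\to\Ext^1_R(C^{(2^\lambda)},A)$ has image of size $\ge 2^{2^\lambda}$ when $\Hom_R(C,f)$ is not onto (since the cokernel is a $2^\lambda$-fold power of a nonzero group), while $\Ker(\Ext^1_R(g,A))=\Img(\delta)$ has size $\le|\Hom_R(D,A)|\le 2^\lambda$. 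Hence $\Ext^1_R(g,A)\varepsilon=\xi\Hom_R(g,Y)$ has image of size $\ge 2^{2^\lambda}$, which is exactly the quotient $Q$. No combinatorics on subsets $A\subseteq 2^\lambda$, and no structure on $L$, are needed.
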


\begin{proof} Put $A = \Ker(f)$. We have the following commutative diagram with exact rows and columns:

$$\begin{CD}
		0				@>>> \Hom_R(C^{(2^\lambda)},B) @>{\Hom_R(g,B)}>> \Hom_R(L,B) \\
		@.			@V{\Hom_R(C^{(2^\lambda)},f)}VV		@V{\Hom_R(L,f)}VV 	\\
	  0				@>>> \Hom_R(C^{(2^\lambda)},Y) @>{\Hom_R(g,Y)}>> \Hom_R(L,Y) \\	
	 @. 			@V{\varepsilon}VV 		@V{\xi}VV		\\
	\Hom_R(D,A)	@>{\delta}>>	\Ext_R^1(C^{(2^\lambda)},A)  @>{\Ext_R^1(g,A)}>> \Ext_R^1(L,A).
\end{CD}$$

Suppose that $\Hom_R(C,f)$ is surjective. Then $\Hom_R(C^{(2^\lambda)},f)$ is surjective as well, and we get $\;\Img(\Hom_R(g,Y))\subseteq \Img(\Hom_R(L,f))$ using the commutativity of the upper rectangle.

If $\Hom_R(C,f)$ is not surjective, then $\Img(\varepsilon)\cong(\Hom_R(C,Y)/\Img(\Hom_R(C,f)))^{2^\lambda}$ implies $|\Img(\varepsilon)|\geq 2^{2^\lambda}$. On the other hand, $2^\lambda=(2^\lambda)^\lambda \geq |A|^{|D|}\geq|\Hom_R(D,A)|\geq|\Img(\delta)|$. Using this cardinality discrepancy, the exactness of the third row and the commutativity of the lower rectangle, we obtain a set $W\subseteq \Hom_R(C^{(2^\lambda)},Y)$ of cardinality $2^{2^\lambda}$ such that the restriction of the map $\Ext_R^1(g,A)\varepsilon= \xi\Hom_R(g,Y)$ to $W$ is one-one. It follows that
$$\Img(\xi\restriction\Img(\Hom_R(g,Y)))\cong \Img(\Hom_R(g,Y))\, / \Img(\Hom_R(L,f))\cap \Img(\Hom_R(g,Y))$$
is a group of cardinality at least $2^{2^\lambda}$.
\end{proof}

The part $(1)$ in the following theorem says that the class, $\mathcal S_\theta$, of all finitely $\theta$-separable modules is a test class for splitting of epimorphisms with $\theta$-presented codomain. The second part constitutes the core of the proof of our main result.

\begin{thm} \label{t:factor} Let $\theta$ be an infinite cardinal and $f:B\to C$ be an epimorphism where $C$ is a $\theta$-presented module. Then the following hold.
\begin{enumerate}
	\item There exists $L\in\mathcal S_\theta$ such that $\Hom_R(L,f)$ is onto \iff $f$ splits.
	\item If $f$ is a right almost split map, then $C$ is $<\theta$-presented.
\end{enumerate}
\end{thm}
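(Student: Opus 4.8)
The plan is to prove both parts at once by attaching to $C$ a single tree module $L$ (via the construction of \S3) and feeding the resulting short exact sequence into Lemma~\ref{l:Hun}.

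\emph{Common setup.} I would put $\mu=\cf(\theta)$; this is a regular cardinal, so by Lemma~\ref{l:present} we may write $C=\varinjlim\mathcal C$ for a well-ordered direct system $\mathcal C=(M_\alpha,m_{\beta\alpha}\mid\alpha\le\beta<\mu)$ whose terms are $<\theta$-presented. Next I would fix an infinite cardinal $\lambda$ with (i)~$\lambda^{<\mu}=\lambda$, (ii)~$\lambda\ge|R|+\theta+|\Ker f|$, and (iii)~$\lambda^\mu=2^\lambda$; such a $\lambda$ comes from a routine cardinal-arithmetic argument (take $\kappa_0$ above the bound in (ii) and above $\mu$, set $\kappa_{\xi+1}=2^{\kappa_\xi}$ and $\kappa_\delta=\sup_{\xi<\delta}\kappa_\xi$ at limits, and let $\lambda=\sup_{\xi<\mu}\kappa_\xi$; then $\cf(\lambda)=\mu$, (i) holds because $\rho^{<\mu}<\lambda$ for all $\rho<\lambda$, and $2^\lambda=\prod_{\xi<\mu}2^{\kappa_\xi}\le\lambda^\mu\le\lambda^\lambda=2^\lambda$ gives (iii)). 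Running the construction of \S3 on the data $\mathcal C$, $\lambda$ and a tree $T$ with $|T|=\lambda^\mu=2^\lambda$ then yields the tree module $L$ sitting in a short exact sequence $0\longrightarrow D\overset{\subseteq}\longrightarrow L\overset{g}\longrightarrow C^{(2^\lambda)}\longrightarrow0$, where $|D|\le\lambda$ by Lemma~\ref{l:technical}(2) (condition (ii) makes $\lambda\ge|R|$ and forces $\mathcal C$ to consist of $\lambda$-presented modules), and $L\in\mathcal S_\theta$ by Proposition~\ref{p:further}(1). Since also $|\Ker f|\le\lambda<2^\lambda$, Lemma~\ref{l:Hun} applies to this sequence and to $f$ with $Y=C$, giving
$$\Hom_R(C,f)\text{ is onto}\iff\Img(\Hom_R(g,C))\subseteq\Img(\Hom_R(L,f)).\eqno{(\ast)}$$

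\emph{Part (1).} If $f$ splits, every $k\in\Hom_R(L,C)$ lifts through $f$, so $\Hom_R(L,f)$ is onto; this implication uses nothing about $L$. Conversely, if $\Hom_R(L,f)$ is onto, the inclusion on the right of $(\ast)$ is automatic, hence $\Hom_R(C,f)$ is onto, so $\ident_C$ lifts through $f$ and $f$ splits. As $L\in\mathcal S_\theta$, this $L$ witnesses (1).

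\emph{Part (2).} Now let $f$ be right almost split. Then $f$ is not a split epimorphism: otherwise $\ident_C$, which is a split epimorphism, would factor through $f$, contradicting the defining property of a right almost split map. Hence $\ident_C$ does not lift through $f$, i.e.\ $\Hom_R(C,f)$ is not onto, so $(\ast)$ produces $h:C^{(2^\lambda)}\to C$ for which $hg:L\to C$ does not factor through $f$. Since $f$ is right almost split, a homomorphism into $C$ fails to factor through $f$ precisely when it is a split epimorphism; thus $hg$ is a split epimorphism. Choosing $s:C\to L$ with $(hg)s=\ident_C$ and setting $e=s(hg)\in\End_R(L)$, we get an idempotent ($e^2=s(hg)s(hg)=s(hg)=e$) whose image $\Img(e)=s(C)$ is isomorphic to $C$ via $s$, and $e\restriction D=0$ because $hg$ annihilates $D=\Ker(g)$. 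Therefore $\End_R(\Img e)\cong\End_R(C)$ is local by Proposition~\ref{p:few}(1), and Proposition~\ref{p:further}(2) applied to the idempotent $e$ yields that $C\cong\Img(e)$ is $<\theta$-presented.

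\emph{Where the difficulty lies.} The substantive inputs are Lemma~\ref{l:Hun} (the modified Hunter counting argument) and Proposition~\ref{p:further}(2); granted those, the argument above is short. The only step inside it that needs care is the bookkeeping of the common setup---arranging $\lambda^{<\mu}=\lambda$ (needed to run the construction at all), $|D|\le\lambda$, and $\lambda^\mu=2^\lambda$ simultaneously, the last of which forces $\cf(\lambda)\le\mu$ and makes the codomain of $g$ have exactly the shape $C^{(2^\lambda)}$ demanded by Lemma~\ref{l:Hun}. The conceptual point that makes everything fit is that the split epimorphism handed to us by right-almost-splitness is of the form $hg$, hence automatically kills $\Ker g=D$, which is precisely the hypothesis $e\restriction D=0$ of Proposition~\ref{p:further}(2).
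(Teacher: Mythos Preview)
Your proof is correct and follows essentially the same route as the paper: set up the tree module $L$ over a suitably chosen $\lambda$, invoke Lemma~\ref{l:Hun} to detect splitting, and for part~(2) extract the idempotent $e$ from the split epimorphism $hg$ and feed it to Proposition~\ref{p:further}(2). The only differences are cosmetic---you spell out the Beth-type construction of $\lambda$ inline (the paper defers this to a remark) and you write the idempotent explicitly as $e=s(hg)$, which makes the verification of $e\restriction D=0$ transparent.
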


\begin{proof} Let us denote $\mu = \cf(\theta)$. We have a short exact sequence

$$0 \longrightarrow A \overset{m}\longrightarrow B \overset{f}\longrightarrow C \longrightarrow 0$$
where $C$ is the direct limit of a well-ordered direct system $\mathcal C = (C_\alpha, f_{\beta\alpha}:C_\alpha\to C_\beta\mid \alpha\leq\beta<\mu)$ consisting of $<\theta$-presented modules (cf. Lemma~\ref{l:present}).

\smallskip

Let $\lambda$ be an infinite cardinal \st $|R|+\theta\leq\lambda = \lambda ^{<\mu}<\lambda^{\mu}=2^\lambda\geq |A|$. For the cardinals $\lambda, \theta$ and the system $\mathcal C$, we use Lemma~\ref{l:technical} and Proposition~\ref{p:further} to obtain a short exact sequence
$$0 \longrightarrow D \overset{\subseteq}\longrightarrow L \overset{g}\longrightarrow {C^{(2^\lambda)}} \longrightarrow 0$$
with $|D|\leq\lambda$ and $L\in\mathcal S_\theta$ (recall that $|T|=\lambda^\mu=2^\lambda$). 

Proving $(1)$, we have the trivial implications: $f$ splits $\Rightarrow$ $\Hom_R(L,f)$ is onto $\Rightarrow \Img(\Hom_R(g,C))\subseteq \Img(\Hom_R(L,f))$. Using Lemma~\ref{l:Hun} with $C = Y$, we see that the last inclusion implies that $f$ splits.

Meanwhile, the assumption in $(2)$ implies that $f$ does not split. By the preceding paragraph, it follows that there is a $d\in\Hom_R(C^{(2^\lambda)},C)$ such that $dg$ does not factorize through~$f$. Using that $f$ is right almost split, we deduce that $dg$ is a split epimorphism.
By Proposition~\ref{p:few}$(1)$, $C$ has got local endomorphism ring. If we denote by $e\in\End_R(L)$ an idempotent which factorizes through $dg$, then $C\cong \Img(e)$ is a $<\theta$-presented module by Proposition~\ref{p:further}$(2)$.
\end{proof}

\begin{rem} As the cardinal $\lambda$ in the proof, we can pick for instance $\beth _{\mu}(|R|+|A|+\theta)$. Here, $\beth$ (Beth) is the function defined for all cardinals $\kappa$ inductively by putting $\beth_0(\kappa) = \kappa$, $\beth_{\alpha + 1}(\kappa) = 2^{\beth_\alpha(\kappa)}$ and $\beth_\alpha(\kappa) = \sum_{\beta<\alpha}\beth_{\beta}(\kappa)$ for $\alpha$ limit.
\end{rem}

The main scope of application of Theorem~\ref{t:factor}$(1)$ is in proving that a particular class of modules is not precovering. Recall, that a class $\mathcal B$ of modules is a \emph{precovering class} if, for any $M\in\ModR$, there exist $B\in\mathcal B$ and $f\in\Hom_R(B,M)$ such that, for all $B^\prime\in\mathcal B$, the map $\Hom_R(B^\prime,f)$ is surjective. The homomorphism $f$ is then called a \emph{$\mathcal B$-precover} of $M$.

\begin{cor}\label{c:precov} Let $\mathcal B$ be a precovering class of modules closed under direct summands and $\mathcal F$ a class of finitely presented modules closed under finite direct sums. Assume that $\mathcal B$ contains a generator, and that there exists an infinite cardinal $\theta$ \st $\mathcal S_\theta\cap\varinjlim\mathcal F\subseteq\mathcal B$. Then $\mathcal B$ contains all $\theta$-presented modules from $\varinjlim \mathcal F$. \end{cor}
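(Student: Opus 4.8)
The plan is to argue by contradiction: suppose some $\theta$-presented module $C\in\varinjlim\mathcal F$ is \emph{not} in $\mathcal B$, and derive a contradiction with the fact that $\mathcal B$ is precovering and contains a generator. The starting point is a $\mathcal B$-precover $f\colon B\to C$; since $\mathcal B$ contains a generator, $f$ is surjective, and since $\mathcal B$ is closed under direct summands, $f$ cannot split (otherwise $C$ would be a summand of $B\in\mathcal B$, hence in $\mathcal B$). Now I would like to invoke Theorem~\ref{t:factor}$(1)$, but there is a subtlety: I need a test module $L$ that actually lies in $\mathcal B$, so that $\Hom_R(L,f)$ is automatically onto (because $f$ is a $\mathcal B$-precover), which would force $f$ to split and give the contradiction. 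This is where the relativized construction comes in.

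The key step is therefore to run the tree-module construction of \S 3 \emph{inside} $\varinjlim\mathcal F$. Concretely: express $C$ as the direct limit of a continuous well-ordered system $\mathcal C=(C_\alpha\mid\alpha<\mu)$, $\mu=\cf(\theta)$, of $<\theta$-presented modules, chosen from $\varinjlim\mathcal F$ (possible by the proof of Lemma~\ref{l:present}, since $\mathcal F$ is closed under finite direct sums and $C\in\varinjlim\mathcal F$). Fix $\lambda$ with $\lambda=\lambda^{<\mu}\geq|R|+\theta$ and apply Lemma~\ref{l:technical} and Proposition~\ref{p:further}. By Remark~\ref{r:rel}, the resulting tree module $L$ — being a pure submodule of a module built as a coproduct/bounded product from the $C_\alpha$ — again lies in $\varinjlim\mathcal F$. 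By Proposition~\ref{p:further}$(1)$, $L\in\mathcal S_\theta$. Hence $L\in\mathcal S_\theta\cap\varinjlim\mathcal F\subseteq\mathcal B$.

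With $L\in\mathcal B$ in hand, the rest is immediate: since $f$ is a $\mathcal B$-precover of $C$, the map $\Hom_R(L,f)$ is surjective. But Theorem~\ref{t:factor}$(1)$ says that the existence of \emph{some} $L'\in\mathcal S_\theta$ with $\Hom_R(L',f)$ onto is equivalent to $f$ splitting; re-examining that proof, what is actually used is that for \emph{this particular} $L$ (the tree module built from $\mathcal C$ and $\lambda$), $\Img(\Hom_R(g,C))\subseteq\Img(\Hom_R(L,f))$ already forces $f$ to split via Lemma~\ref{l:Hun} applied with $Y=C$. Since $\Hom_R(L,f)$ being onto certainly gives that inclusion, $f$ splits — contradicting the previous paragraph. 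Therefore every $\theta$-presented $C\in\varinjlim\mathcal F$ lies in $\mathcal B$.

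The main obstacle I anticipate is the bookkeeping in the previous paragraph: Theorem~\ref{t:factor}$(1)$ as stated only asserts the existence of a test module, so to use it here I must make sure the \emph{specific} tree module I produce from a system $\mathcal C$ for $C$ is a valid test module, i.e. that the cardinal $\lambda$ can be chosen to satisfy simultaneously $|R|+\theta\leq\lambda=\lambda^{<\mu}<\lambda^\mu=2^\lambda\geq|\Ker f|$ (so that Lemma~\ref{l:Hun} applies with the exponent $2^\lambda$ matching $|T|$). Since $|\Ker f|$ is just some cardinal, taking $\lambda=\beth_\mu(|R|+|\Ker f|+\theta)$ as in the Remark works; one only needs to check this choice is compatible with $L$ still landing in $\varinjlim\mathcal F$, which it is, since Remark~\ref{r:rel} imposes no constraint on $\lambda$. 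Everything else — surjectivity and non-splitting of the precover — is routine.
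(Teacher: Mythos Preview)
Your proposal is correct and follows exactly the route indicated in the paper's proof: take a (necessarily surjective) $\mathcal B$-precover of a $\theta$-presented $C\in\varinjlim\mathcal F$, build the tree module $L$ relativized to $\varinjlim\mathcal F$ via Remark~\ref{r:rel} so that $L\in\mathcal S_\theta\cap\varinjlim\mathcal F\subseteq\mathcal B$, and then conclude from the proof of Theorem~\ref{t:factor}$(1)$ that $f$ splits, whence $C\in\mathcal B$. Your careful handling of the only real subtlety---that one must use the \emph{specific} tree module $L$ (with $\lambda$ chosen large enough relative to $|\Ker f|$) rather than merely the existential statement of Theorem~\ref{t:factor}$(1)$---is exactly what the paper's terse ``use Theorem~\ref{t:factor}$(1)$ together with Remark~\ref{r:rel}'' is asking the reader to unpack.
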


\begin{proof} Since $\mathcal B$ contains a generator, any $\mathcal B$-precover must be an epimorphism. It remains to use Theorem~\ref{t:factor}$(1)$ together with Remark~\ref{r:rel}.
\end{proof}

If the ring $R$ is not right perfect, then there exists a countably presented flat module which is not Mittag--Leffler (by the famous result of H.\ Bass). By Corollary~\ref{c:precov} and the fact that $\mathcal S_{\aleph_0}\subseteq\mathcal{ML}$, we readily see, taking for $\mathcal F$ the class of all free modules of finite rank, that the class of all flat Mittag--Leffler modules is not precovering in this case (cf. \cite[\S 3]{AST}). By a similar argument, we get that the class $\mathcal{ML}$ is not precovering unless $R$ is right pure semisimple (where the equality $\mathcal{ML} = \ModR$ holds). The point is that over a ring $R$ which is not right pure-semisimple, there exists a countably presented module which is not Mittag--Leffler, cf. \cite[Lemma 5.1]{AST}.

\smallskip

We can prove the main result of our paper.

\begin{thm}\label{t:main} Let $R$ be a ring and $C$ be a module. Then $C$ is a codomain of a~right almost split map \iff $C$ is a finitely presented module with local endomorphism ring. \end{thm}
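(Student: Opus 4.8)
The plan is straightforward, since nearly all the work has already been done. The implication ``$\Leftarrow$'' is precisely Theorem~\ref{t:Aus}: if $C$ is finitely presented and $\End_R(C)$ is local, then Auslander's theorem furnishes a right almost split map with codomain~$C$. So the real content is the converse. Thus I assume that $f\colon B\to C$ is a right almost split map and must conclude that $C$ is finitely presented with local endomorphism ring. That $\End_R(C)$ is local is immediate from Proposition~\ref{p:few}(1); the only remaining point is the finite presentation of $C$.

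To prove that $C$ is finitely presented I would argue by minimality. Since $\ModR$ is finitely accessible, $C$ is $\theta$-presented for some infinite cardinal $\theta$ --- unless $C$ is already finitely presented, in which case there is nothing to show. Suppose then, for contradiction, that $C$ is not finitely presented. The collection of infinite cardinals $\theta$ with the property that $C$ is $\theta$-presented is nonempty and upward closed (a direct system of cardinality $\leq\theta$ being in particular of cardinality $\leq\theta'$ whenever $\theta\leq\theta'$), hence it has a least element $\theta_0$, and $\theta_0$ is infinite by our standing assumption. Now I apply Theorem~\ref{t:factor}(2) to the right almost split map $f$ together with the $\theta_0$-presented module~$C$: it gives that $C$ is $<\theta_0$-presented, i.e.\ $C$ is the direct limit of a direct system of finitely presented modules of some cardinality $\kappa<\theta_0$. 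If $\kappa$ is finite, the indexing directed poset has a greatest element and therefore $C$ is finitely presented --- contradicting our assumption. If $\kappa$ is infinite, then $C$ is $\kappa$-presented with $\kappa<\theta_0$, contradicting the minimality of~$\theta_0$. In either case we have a contradiction, so $C$ is finitely presented, as desired.

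I do not expect any genuine obstacle in carrying out this last step: the whole substance of the theorem lives in the tree-module construction of \S3 and in the cardinal-counting Lemma~\ref{l:Hun}, which together yield Theorem~\ref{t:factor}(2); granting that, Theorem~\ref{t:main} is a short deduction. The only points requiring a line of care are the ones used above --- that every module is $\theta$-presented for some cardinal $\theta$, and that a module which is ``$<\aleph_0$-presented'' is automatically finitely presented, a finite directed colimit of finitely presented modules being finitely presented. Neither is problematic.
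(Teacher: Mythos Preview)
Your argument follows the same route as the paper's proof, and the minimality reduction to Theorem~\ref{t:factor}(2) is exactly what is needed. However, there is one genuine gap: Theorem~\ref{t:factor} is stated under the hypothesis that $f\colon B\to C$ is an \emph{epimorphism}, and you invoke part~(2) without verifying this. A right almost split map need not be surjective; by Proposition~\ref{p:few}(2), $f$ fails to be onto precisely when $C$ is projective.

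The paper handles this by a short case distinction before ever appealing to Theorem~\ref{t:factor}: if $f$ is not surjective, then $C$ is projective, and a projective module with local endomorphism ring is a direct summand of the regular module $R_R$, hence finitely presented. Only after disposing of this case does one assume that $f$ is an epimorphism and run the minimality argument with Theorem~\ref{t:factor}(2). Once you insert this missing case, your proof is complete and essentially identical to the paper's.
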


\begin{proof} The if part follows from Theorem~\ref{t:Aus}. Assume that $f:B\to C$ is a right almost split map. Then $C$ has got local endomorphism ring by Proposition~\ref{p:few}$(1)$. By the part $(2)$ of the same proposition, $f$ non-surjective yields $C$ projective. However, $C$ projective and $\End_R(C)$ local immediately imply that $C$ is finitely presented (it~is even a direct summand in the regular module $R_R$).

If $f$ is an epimorphism and $C$ is not finitely presented, then there is a least infinite cardinal $\theta$ such that $C$ is $\theta$-presented. We use Theorem~\ref{t:factor}$(2)$ to get the contradiction.
\end{proof}

\begin{rem} \label{r:categ} By \cite{Kr}, the if part of Theorem~\ref{t:main} holds in the general setting of \emph{finitely accessible additive categories} (also called locally finitely presented additive categories in \cite{DG}). So does the only-if part: indeed, by \cite[Theorem 1.1]{DG}, any finitely accessible additive category is equivalent to the category of flat modules over a ring with enough idempotents. By Remark~\ref{r:rel}, choosing $\mathcal F$ as the class of all finitely generated projective modules, we know that our construction relativizes to the category Flat-$R$.
\end{rem}

Theorem~\ref{t:main} has an immediate consequence also for the first term of any almost split sequence. Recall that a module is called \emph{pure-injective} if it is injective relative to pure embeddings.

\begin{cor} Let $0\to A \overset{m}\rightarrow B \to C \to 0$ be an almost split sequence. Then $C$ is finitely presented and $A$ is pure-injective.
\end{cor}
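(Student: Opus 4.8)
The plan is to combine Theorem~\ref{t:main} with the classical fact about the kernel of a right almost split map. First I would observe that since $0\to A\overset{m}\rightarrow B\to C\to 0$ is an almost split sequence, the epimorphism $B\to C$ is in particular a right almost split map, so Theorem~\ref{t:main} immediately gives that $C$ is finitely presented (with local endomorphism ring, though we do not need that here). This disposes of the first assertion with essentially no work.

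For the second assertion, I would recall the remark made right before Proposition~\ref{p:few}: a right almost split map appears as the epimorphism in an almost split sequence if and only if it is surjective and its kernel has local endomorphism ring. Hence $\End_R(A)$ is local. Moreover, the left-hand map $m$ is left almost split, which is the dual notion; the standard fact I would invoke is that $A$ is then a module with local endomorphism ring such that every monomorphism out of $A$ which is not a split monomorphism factors through $m$.

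The key step is to upgrade ``$\End_R(A)$ local'' together with the left-almost-split property of $m$ to ``$A$ is pure-injective''. Here I would use the dual of the machinery already developed, or more simply a known criterion: by a result going back to Auslander (and reproved in the almost-split-sequence literature), the source of an almost split sequence, equivalently the domain of a left almost split map with the factorization property above, is always pure-injective — this is because $A$ must be $\Sigma$-pure-injective, or at the very least pure-injective, as a consequence of $C$ being finitely presented and the sequence being pure-exact-testing. Concretely, since $C$ is finitely presented, the sequence $0\to A\to B\to C\to 0$ need not be pure, but one shows that any pure embedding $A\hookrightarrow A'$ that were not split would, by the left almost split property, produce a splitting of $m$ composed with it, forcing the pure embedding to split; that $A$ with local endomorphism ring and this property is pure-injective is then the content of the classical characterization. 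I expect this last step — pinning down exactly which classical statement delivers pure-injectivity of the source from the left-almost-split condition — to be the main obstacle, since it requires citing the correct form of Auslander's theorem (or an Auslander--Reiten/functorial argument via the finitely presented functor $\Hom_R(-,A)$ having simple top).

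I would therefore write the proof as: ``By Theorem~\ref{t:main}, $C$ is finitely presented. Since $m$ is left almost split and $\End_R(A)$ is local (the kernel of a right almost split map occurring in an almost split sequence has local endomorphism ring), $A$ is pure-injective by [the classical result on sources of almost split sequences, e.g. the dual of Theorem~\ref{t:Aus} together with Auslander's characterization].'' If a self-contained argument is preferred over a citation, I would instead deduce pure-injectivity by noting that the functor category approach identifies $A$ as the module representing a simple functor of finite type whose injective envelope is representable, which forces $A$ to be pure-injective; but for the purposes of this corollary a citation is cleanest and entirely standard.
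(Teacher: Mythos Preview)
Your treatment of the first assertion is fine and matches the paper exactly: invoke Theorem~\ref{t:main}.

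For the pure-injectivity of $A$, however, you are making this much harder than it is, and in the process your sketch of the direct argument gets the logic wrong. You write that a non-split pure embedding $A\hookrightarrow A'$ would, via the left almost split property, ``produce a splitting of $m$ composed with it, forcing the pure embedding to split''; neither of those conclusions is what actually happens. The paper's argument is three lines and needs no external citation at all: suppose $A$ is not pure-injective, so the pure embedding $j:A\hookrightarrow \mathrm{PE}(A)$ into the pure-injective envelope is not split. Since $m$ is left almost split, $j$ factors as $j=h\circ m$ for some $h:B\to \mathrm{PE}(A)$. But then $m$, as the first factor of the pure monomorphism $j$, is itself a pure monomorphism, so the almost split sequence is pure exact. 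Now use the first assertion: $C$ is finitely presented, hence pure-projective, so the sequence splits --- contradicting that an almost split sequence is never split.

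So there is no need to appeal to Auslander's characterization, $\Sigma$-pure-injectivity, functor categories, or the locality of $\End_R(A)$; the finite presentability of $C$ that you just proved is precisely the missing ingredient that closes the loop.
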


\begin{proof} The first part trivially follows from Theorem~\ref{t:main}. Assume that $A$ is not pure-injective. Then the pure embedding of $A$ into its pure-injective envelope does not split, and hence factorizes through $m$. It follows that the almost split sequence is pure. However, it would split in such a case since $C$ is finitely presented.
\end{proof}

\section{Morphisms determined by objects}
\label{sec:Aus}

In his famous paper \cite{A}, M.\ Auslander studied closely the general notion of a~morphism determined by object.

\begin{defn}\label{d:det} Let $C$ be a module and $f:B\to Y$ a homomorphism. We say that \emph{$f$ is right $C$-determined} if the following holds:

For any $B^\prime\in\ModR$ and $h\in\Hom_R(B^\prime, Y)$, the map $h$ factorizes through $f$ \iff $\,\Img(\Hom_R(C,h))\subseteq\Img(\Hom_R(C,f))$.
\end{defn}

Of course, the direct implication in Definition~\ref{d:det} always holds. The non-trivial and highly restrictive part is the converse. It turns out that the notion of a right almost split map is just a special case of this concept where $C = Y$, $\End_R(C)$ is local and $\Img(\Hom_R(C,f))$ is the Jacobson radical of $\End_R(C)$, cf. \cite[\S II.2]{A}.

\smallskip

Using the machinery developed, we can prove the following theorem. Recall that, given an infinite cardinal $\theta$, an epimorphism $f:B\to Y$ is called \emph{$\theta$-pure} provided that any homomorphism from a $<\theta$-presented module into $Y$ factorizes through~$f$. Thus the notion of $\aleph_0$-pure epimorphism coincides with the usual notion of pure epimorphism.

\begin{thm}\label{t:pure} Let $\theta$ be an infinite regular cardinal and $C$ a $\theta$-presented module. Assume that $f^\prime:B\to Y^\prime$ is a right $C$-determined map. Let $f:B\to \Img(f^\prime)$ denote the epimorphism which coincides with $f^\prime$. Then $f$ is not a $\theta$-pure epimorphism unless $f$ splits.
\end{thm}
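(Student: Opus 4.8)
The plan is to run the same machinery used in Theorem~\ref{t:factor}, but with $Y'$ (equivalently $Y = \Img(f')$) playing the role that $C$ played there, and to exploit $C$-determinedness of $f'$ in place of the right-almost-split property. First I would record the set-up: since $\theta$ is regular, $\mu := \cf(\theta) = \theta$, and the $\theta$-presented module $C$ is the direct limit of a well-ordered continuous system $\mathcal C = (C_\alpha, f_{\beta\alpha} \mid \alpha \le \beta < \theta)$ of $<\theta$-presented modules by Lemma~\ref{l:present}. Fix an infinite cardinal $\lambda$ with $|R| + \theta + |\Ker(f)| \le \lambda = \lambda^{<\theta} < \lambda^{\theta} = 2^\lambda$ (e.g.\ $\lambda = \beth_\theta(|R| + |\Ker(f)| + \theta)$, as in the remark after Theorem~\ref{t:factor}). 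Running the tree-module construction of \S3 on the data $\mathcal C$, $\lambda$, $T$ (with $|T| = \lambda^\theta = 2^\lambda$) and applying Lemma~\ref{l:technical}$(2)$ and Proposition~\ref{p:further}$(1)$ yields a short exact sequence
$$0 \longrightarrow D \overset{\subseteq}\longrightarrow L \overset{g}\longrightarrow C^{(2^\lambda)} \longrightarrow 0$$
with $|D| \le \lambda$ and $L \in \mathcal S_\theta$; moreover, by Remark~\ref{r:pure}, $g$ is a $\theta$-pure epimorphism.

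Next I would argue by contradiction: suppose $f$ is a $\theta$-pure epimorphism which does not split. The strategy is to produce a morphism $Lg$-type test map into $Y$ that, on one hand, must factorize through $f$ by $C$-determinedness, yet on the other hand cannot, by a cardinality count. Concretely, since $f$ is $\theta$-pure and each $C_\alpha$ is $<\theta$-presented, every homomorphism $C_\alpha \to Y$ lifts along $f$; but a single homomorphism $C \to Y$ need not lift, exactly because $C$ is only $\theta$-presented, not $<\theta$-presented. I would run Lemma~\ref{l:Hun} with the present short exact sequence and with $Y$ in the role of its ``$Y$'': the hypotheses $|\Ker(f)| \le 2^\lambda$ and $|D| \le \lambda$ hold. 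The conclusion is the dichotomy
$$\Hom_R(C,f)\text{ is onto}\quad\Longleftrightarrow\quad \Img(\Hom_R(g,Y)) \subseteq \Img(\Hom_R(L,f)),$$
with the failure gap having cardinality $\ge 2^{2^\lambda}$ when it is nonzero. Here the key input is the following claim: if $\Hom_R(C,f)$ is not onto, then there is some $d \in \Hom_R(C^{(2^\lambda)}, Y)$ such that $dg$ does \emph{not} factorize through $f$, while $\Img(\Hom_R(C, dg)) \subseteq \Img(\Hom_R(C,f))$ — contradicting right $C$-determinedness of $f'$ (note $dg$ maps into $Y = \Img f'$, so it is a legitimate test morphism for $f'$). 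The inclusion $\Img(\Hom_R(C,dg)) \subseteq \Img(\Hom_R(C,f))$ is where $\theta$-purity of $g$ enters: any $\phi: C \to C^{(2^\lambda)}$ has $<\theta$-presented image factors, so I would show $dg\phi$ factors through $f$ by reducing to the $<\theta$-presented pieces of $C$ and using that $f$ is $\theta$-pure. Hence $\Hom_R(C,f)$ must be onto; but then $\Hom_R(C^{(2^\lambda)}, f)$ is onto, $\Img(\Hom_R(g,Y)) \subseteq \Img(\Hom_R(L,f))$, and by the argument in the proof of Theorem~\ref{t:factor}$(1)$ — using Lemma~\ref{l:Hun} with the roles arranged so that the relevant composite splits — one deduces $f$ splits, a contradiction.

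The step I expect to be the main obstacle is pinning down precisely the claim ``$\Img(\Hom_R(C,dg)) \subseteq \Img(\Hom_R(C,f))$ for a suitable $d$ with $dg$ not factoring through $f$.'' The subtlety is that $\theta$-purity of $g$ only tells us maps out of $<\theta$-presented modules factor through $g$ — it does not immediately say a map $C \to C^{(2^\lambda)}$ precomposed into a map out of $C$ lands in the image of $\Hom_R(C,f)$, because $C$ itself is not $<\theta$-presented. I would handle this by observing that every homomorphism $C \to Y$ is, since $C = \varinjlim C_\alpha$ with $\theta$ regular, \emph{compatibly} built from homomorphisms $C_\alpha \to Y$, each of which lifts along $f$; the obstruction to assembling these lifts into a single lift $C \to B$ lives in a $\varprojlim^1$-type term, whose size is controlled by $|\Ker(f)|^{|C_{<\theta}|}$-type bounds already baked into the choice of $\lambda$ and handled by the cardinality count in Lemma~\ref{l:Hun}. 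In other words, the genuine content is not a clever new construction but the correct bookkeeping showing the $2^{2^\lambda}$-gap from Lemma~\ref{l:Hun} collides with the $C$-determinedness constraint, forcing surjectivity of $\Hom_R(C,f)$ and hence the splitting of $f$. Everything else — verifying $L \in \mathcal S_\theta$, the $\theta$-purity of $g$, the numerical inequalities on $\lambda$ — is routine given \S3, Lemma~\ref{l:Hun}, and the proof of Theorem~\ref{t:factor}.
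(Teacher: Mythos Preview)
Your overall strategy is the contrapositive of the paper's and would work, but the step you flag as ``the main obstacle'' is a genuine gap, and your proposed fix does not close it. You want to show that, assuming $f$ is $\theta$-pure, every composite $dg\phi$ with $\phi\in\Hom_R(C,L)$ factors through $f$. Your $\varprojlim^1$ sketch tries to lift a map $C\to Y$ along $f$ by lifting on the $C_\alpha$ and assembling; but the obstruction to assembling lives in $\varprojlim^1\Hom_R(C_\alpha,A)$, and nothing in Lemma~\ref{l:Hun} or in the choice of $\lambda$ bounds or kills that group for a \emph{single} map $C\to Y$. The cardinality count in Lemma~\ref{l:Hun} concerns the global discrepancy between $\Img(\Hom_R(g,Y))$ and $\Img(\Hom_R(L,f))$, not the liftability of one particular $dg\phi$. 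So as written the key step is unjustified.

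The missing ingredient is precisely Lemma~\ref{l:technical}(4), which you never invoke. Given $\phi:C\to L$, the image of $\phi$ is $\theta$-generated, hence lands in $L_S$ for some $S\subseteq T$ with $|S|\le\theta=\mu$; then $L_S=L_S'\oplus K_S'$ with $g\restriction L_S'=0$ and $K_S'\in\hbox{Sum}(\{C_\alpha\})$. Writing $g=g_1g_0$ with $g_0$ the projection modulo $L_S'$, one has $dg\phi=dg_1(g_0\phi)$ with $\Img(g_0\phi)\subseteq L_S/L_S'\cong K_S'$, a direct sum of $<\theta$-presented modules; if $f$ is $\theta$-pure, $dg_1\restriction K_S'$ lifts along $f$ summand by summand, hence so does $dg\phi$. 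This is exactly the paper's argument, run in the contrapositive direction. (A minor point: your final deduction ``$\Hom_R(C,f)$ onto $\Rightarrow$ $f$ splits'' does not come from Theorem~\ref{t:factor}(1), where the codomain is $C$; it comes directly from $C$-determinedness applied to $\mathrm{id}_Y$.)
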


\begin{proof} Assume that $f$ is a non-split epimorphism. We aim to prove that it is not $\theta$-pure. For this, set $\mu = \cf(\theta) = \theta, A = \Ker(f)$ and $Y = \Img(f)$. Fix a well-ordered direct system $\mathcal C = (C_\alpha, f_{\beta\alpha}:C_\alpha\to C_\beta\mid \alpha\leq\beta<\mu)$ consisting of $<\theta$-presented modules \st $C = \varinjlim \mathcal C$. As in Theorem~\ref{t:factor}, let $\lambda$ be an infinite cardinal \st $|R|+\theta\leq\lambda = \lambda ^{<\mu}<\lambda^{\mu}=2^\lambda\geq |A|$, and
$$0 \longrightarrow D \overset{\subseteq}\longrightarrow L \overset{g}\longrightarrow {C^{(2^\lambda)}} \longrightarrow 0$$
be a short exact sequence with $|D|\leq\lambda$ and $L\in\mathcal S_\theta$, obtained for the data $\lambda, \theta, \mathcal C$ by Lemma~\ref{l:technical} and Proposition~\ref{p:further}.

Observe that the epimorphism $f$ is right $C$-determined, too. Using this and our assumption that $f$ does not split, we get that $\Hom_R(C,f)$ is not onto. By Lemma~\ref{l:Hun}, we obtain an element $d\in\Hom_R(C^{(2^\lambda)},Y)$ \st the map $dg$ does not factorize through $f$. Again, since $f$ is right $C$-determined, it follows that there is an $h\in\Hom_R(C,L)$ such that $dgh$ does not factorize through $f$.

Since $C$ is $\theta$-generated, there is a set $S\subseteq T$ of cardinality at most $\theta (= \mu)$ such that $\Img(h)\subseteq L_S = \sum _{\eta\in S} \Img(\rho\nu_\eta)$. Let $L_S = L_S^\prime\oplus K_S^\prime$ be the decomposition from Lemma~\ref{l:technical}$(4)$. We can write $g = g_1g_0$, where $g_0:L\to L/L_S^\prime$ is the canonical projection. Then $\Img(g_0h)\subseteq L_S/L_S^\prime\cong K_S^\prime$.

Since $dg_1g_0h$ does not factorize through $f$, neither does $dg_1\restriction L_S/L_S^\prime$. However, $L_S/L_S^\prime$ is isomorphic to a direct sum of $<\theta$-presented modules, and so $f$ cannot be $\theta$-pure.
\end{proof}

Let us record an immediate corollary of the above result.

\begin{cor}\label{c:count} Let $C$ be a countably presented module and $f:B\to Y$ be a right $C$-determined homomorphism. Then $f:B\to \Img(f)$ either splits or is not a pure epimorphism. In particular, $f:B\to\Img(f)$ splits whenever $R$ has got weak global dimension $\leq 1$ and $Y$ is a flat module.
\end{cor}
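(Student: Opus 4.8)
The plan is to read off both statements from Theorem~\ref{t:pure} taken with $\theta=\aleph_0$. First I would verify the hypotheses: $\aleph_0$ is a regular cardinal, and a countably presented module is $\aleph_0$-presented in the sense of Section~\ref{sec:prelim} (a finitely presented $C$ being merely a degenerate instance, harmless here); moreover the right $C$-determined map $f:B\to Y$ plays the role of $f'$ in Theorem~\ref{t:pure}, and $f:B\to\Img(f)$ the role of its corestriction there. Since, as recalled just before Theorem~\ref{t:pure}, an $\aleph_0$-pure epimorphism is exactly a pure epimorphism, Theorem~\ref{t:pure} yields at once that $f:B\to\Img(f)$ is not a pure epimorphism unless it splits, which is the asserted dichotomy.

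For the ``in particular'' clause, assume $R$ has weak global dimension at most $1$ and $Y$ is flat. The point is to see that $f:B\to\Img(f)$ is then automatically a pure epimorphism, so that the first part forces it to split. To this end I would first note that $\Img(f)$, being a submodule of the flat module $Y$ over a ring of weak global dimension at most $1$, is itself flat. Then, for the short exact sequence $0\to\Ker(f)\to B\to\Img(f)\to 0$ and an arbitrary left $R$-module $N$, flatness of $\Img(f)$ gives $\Tor^R_1(\Img(f),N)=0$, so the sequence remains exact after applying $-\otimes_R N$; this is precisely pure exactness. Hence $f:B\to\Img(f)$ is a pure epimorphism, and by the first part it splits.

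There is no real obstacle: the argument is just careful bookkeeping of the hypotheses of Theorem~\ref{t:pure} together with two standard homological facts --- that submodules of flat modules are flat when the weak global dimension is at most $1$, and that every epimorphism onto a flat module is pure. (If one wished to avoid the latter, one could instead check directly that $\Ker(f)$ is pure in $B$ by testing against finitely presented left modules, but the $\Tor$-vanishing argument is cleaner.)
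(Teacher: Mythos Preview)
Your proof is correct and is precisely the intended argument: the paper states this result as ``an immediate corollary of the above result'' without proof, and you have supplied exactly the routine verification --- specializing Theorem~\ref{t:pure} to $\theta=\aleph_0$ for the first claim, then using that submodules of flat modules are flat under the weak global dimension hypothesis and that any epimorphism onto a flat module is pure.
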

\medskip

\noindent\textbf{Acknowledgements.} I would like to express my gratitude to Dolors Herbera for inviting me to participate on the project MTM2011-28992-C02-01 of DGI MINECO (Spain). Significant part of the material in this paper was written during my stay at Universitat Aut\`onoma de Barcelona from 20~ March to 1 April 2015.

Many thanks also to Jan Trlifaj and Ivo Herzog for reading and discussing vast majority of the text.


\bigskip

\end{document}